\newcommand{\Z}{\mathbb{Z}}
\newcommand{\R}{\mathbb{R}}
    \newcommand{\la}{\left\langle}
    \newcommand{\ra}{\right\rangle}
\newcommand{\beq}{\begin{equation}}
\newcommand{\eeq}{\end{equation}}
\newcommand{\p}{\partial}
\newcommand{\re}{\operatorname{Re}}
\newcommand{\supp}{\operatorname{supp}}
\newcommand{\Op}{\operatorname{Op}}
\newcommand{\cO}{\mathcal{O}}
\def\O{\mathcal{O}}
\def\XXint#1#2#3{{
\setbox0=\hbox{$#1{#2#3}{\int}$}
\vcenter{\hbox{$#2#3$}}\kern-.5\wd0}}
\newtheorem{theorem}{Theorem}[section]
\newtheorem{corollary}[theorem]{Corollary}
\newtheorem{lemma}[theorem]{Lemma}
\theoremstyle{definition}
\newtheorem*{remark}{Remark}
\begin{document}
\title{{Conformal Perturbations and Local Smoothing}}
\author{Hans Christianson}
\author{Dylan Muckerman}
\begin{abstract}
The purpose of this paper is to study the effect of conformal
perturbations on the local smoothing effect for the Schr\"odinger
equation on surfaces of revolution. The paper \cite{ChWu-lsm} studied
the Schr\"odinger equation on surfaces of revolution with one trapped
orbit. The dynamics near this trapping were unstable, but degenerately
so. Beginning from the metric $g$ from this paper, we consider the
perturbed metric $g_s = e^{sf}g$, where $f$ is a smooth, compactly
supported function. If $s$ is small enough and finitely many
derivatives of $f$ satisfy appropriate symbolic estimates, then we show that a
local smoothing estimate still holds.
\end{abstract}

\maketitle

\section{{Introduction and Statement of Results}}
Local smoothing estimates  for solutions to the Schr\"odinger equation are
estimates that use the infinite propagation speed to see
high-frequency wave packets leave a compact region faster than
low-frequency wave packets.  
In Euclidean space, the local smoothing result for the Schr\"odinger equation states that on average in time, and locally in space, solutions to the Schr\"odinger equation gain half a derivative compared to their initial data. More precisely, for every $T>0$ there exists $C_T>0$ such that if $u$ solves
\[
\begin{cases}
(D_t-\Delta)u = 0\\
\left.u\right|_{t=0} = u_0,
\end{cases}
\]
then
\[
\int_0^T \|\la r\ra^{-3/2}\p_ru\|^2+\|\la r\ra^{-1/2}r^{-1}\nabla_{S^{n-1}}u\|^2\,dt \le C_T\|u_0\|_{H^{1/2}}^2,
\]
for all $u_0 \in H^{1/2}$.   Here we have used polar coordinates with
$r$ the radial variable.  Note that the spatial weights are not sharp.

The idea of local smoothing was first studied by Kato \cite{Kato} in
the context of the KdV equation.  
Local smoothing for the linear Schr\"odinger equation and other
dispersive type equations was 
studied by Constantin-Saut \cite{ConSau}, Sj\"olin \cite{Sl}, Vega \cite{Vega}, and
Kato-Yajima \cite{KaYa-smooth}. Both \cite{Sl} and \cite{Vega} made use of this
inequality to prove that solutions of the Schr\"odinger equation
converge pointwise almost everywhere to their initial data as $t \to
0$.  We refer to 
 \cite{Tao-book} and \cite{ChWu-lsm} for simple proofs of this
 estimate.

\subsection{{Local smoothing in the presence of trapping}}

One perspective on the local smoothing effect is that it arises from
the dispersive nature of the Schr\"odinger equation. In particular,
high frequency parts of solutions to the Schr\"odinger equation have
higher velocity. By looking locally at solutions, we see ``less'' of
the high frequency part of our solution, and this is what is
responsible for the local smoothing.  In
Euclidean space, where the geodesics are straight lines, this is 
easy to visualize, and the $1/2$ derivative gain in the local
smoothing estimate makes this idea rigorous.  On the
other hand, on a compact manifold, wave packets have no where to
escape so no local smoothing is expected.

But many possibilities exist between Euclidean space and compact manifolds. According to our heuristic argument, the important property of Euclidean space is that every geodesic goes to infinity. In other words, there are no \emph{trapped} geodesics, where a trapped geodesic is a complete geodesic that remains in a compact set for all time.

The relationship between trapping and local smoothing was explored in \cite{Doi}: On asymptotically Euclidean manifolds, solutions to the Schr\"odinger equation exhibit $1/2$ of a derivative of local smoothing if and only if the manifold has no trapped geodesics.

The next question which arises is to what degree the local smoothing effect still holds when a trapped set exists.

The results in \cite{Bur-sm}, \cite{Chr-NC}, \cite{Chr-disp-1}, \cite{Chr-QMNC}, and \cite{Dat-sm}
showed that in the presence of non-degenerate hyperbolic trapping, for any $\epsilon > 0$, there is local smoothing of $1/2 - \epsilon$ derivatives for the Schr\"odinger equation.

\subsection{{Surfaces of Revolution}}

In \cite{ChWu-lsm}, local smoothing is studied on a family of surfaces of
revolution that have periodic geodesics which are unstable, but
degenerately so. In other words, the curvature vanishes to some degree
at the geodesic. The family of surfaces studied are given by rotating the curve
\[
A(x) = (1+x^{2m})^{1/2m},
\]
where $m \ge 2$ is an integer. The local smoothing effect is then
\[
\int_0^T \|\la x \ra^{-3/2}u\|^2_{H^1}\,dt \le C(\|\la D_\theta\ra^{m/(m+1)}u_0\|_{L^2}^2 + \|\la D_x\ra^{1/2} u_0\|_{L^2}^2).
\]
In other words, we gain the full $1/2$ of a derivative of local smoothing in the $x$ direction, but we only gain $1/(m+1)$ derivatives of local smoothing in the $\theta$ direction. Note that as the trapping becomes more stable, the local smoothing gained in the $\theta$ direction goes to $0$.

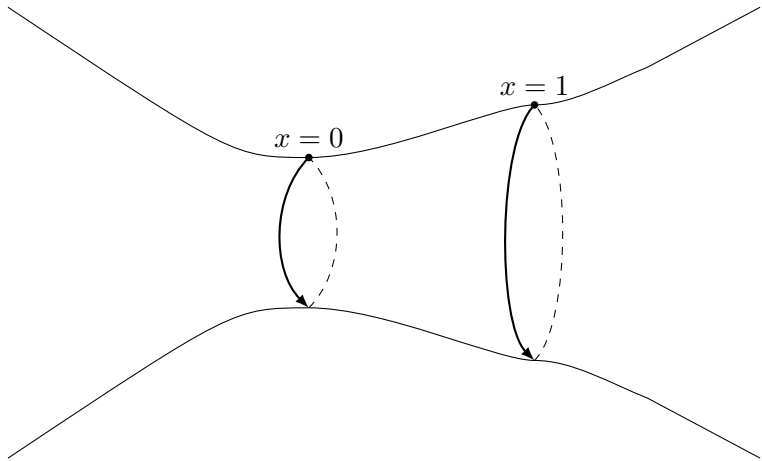
\begin{figure}[ht]
\centering
\begin{tikzpicture}
\draw (-4,3)..controls (-1,1)..(0,1)..controls (1,1) and (2.5,1.7)..
(3,1.7) ..controls(3.5,1.7) and (4,2) ..
(4.5,2.2) -- (6,3);

\fill(3,1.7) circle(.05) node[above]{$x = 1$};
\fill(0,1) circle(.05) node[above]{$x = 0$};

\draw (-4,-3)..controls (-1,-1)..(0,-1)..controls (1,-1) and (2.5,-1.7)..
(3,-1.7) ..controls(3.5,-1.7) and (4,-2) ..
(4.5,-2.2) -- (6,-3);

\draw[-latex,  thick](0,1) .. controls (-.5,.5) and (-.5,-.5)..(0,-1);
\draw[dashed](0,-1) ..controls(.5,-.5) and (.5,.5) ..(0,1);

\draw[-latex,  thick](3,1.7) .. controls (2.5,1.2) and (2.5,-1.2)..(3,-1.7);
\draw[dashed](3,-1.7) ..controls(3.5,-1.2) and (3.5,1.2) ..(3,1.7);

\end{tikzpicture}
\caption{\label{F:trapping} A piece of the manifold with trapped
  geodesics at $x=0$ and $x=1$.  The geodesic at $x = 0$ is
  degenerately unstable (studied in \cite{ChWu-lsm}) and the geodesic
  at $x = 1$ is of inflection-transmission type (studied in
  \cite{ChMe-lsm}).  In this paper we consider a conformal
  perturbation of the manifold studied in \cite{ChWu-lsm}, without any
inflection-transmission trapping.}
\end{figure}

In \cite{ChMe-lsm}, a similar result is proven for a related class of
surfaces of revolution with inflection-transmission type trapping.   
It should be noted that the results of \cite{ChWu-lsm} and \cite{ChMe-lsm} are sharp and show that no better (lower) power of $\la D_\theta\ra$ is possible.

Finally, \cite{Chr-inf-deg} gives details of the connection between resolvent estimates for the Laplacian and local smoothing, and a detailed exposition of how the results obtained in \cite{ChWu-lsm} and \cite{ChMe-lsm} can be combined via ``gluing'' to prove local smoothing results for a wide variety of warped product manifolds.

Similar results are also available for localized energy estimates for the wave equation on surfaces of revolution with degenerate trapping in \cite{BCMP}.

\subsection{{Conformal perturbations of surfaces of revolution}}
The previous results mentioned above essentially complete the study of local smoothing for the Schr\"odinger equation on surfaces of revolution (and warped product manifolds in general). All of these results are essentially 1 dimensional, thanks to the decomposition into Fourier modes. 
We study local smoothing on a family of surfaces which are conformal
perturbations of the surfaces studied in \cite{ChWu-lsm}.

Recall that a surface of revolution is the manifold $M = \R_x \times \R_{\theta}/2\pi\Z$ endowed with the metric
\[
g_0 = dx^2 + A^{2}(x)d\theta^2,
\]
where $A > 0$. We consider conformal perturbations of this metric in which the metric is of the form
\[
g_s = e^{sf(x,\theta)}g_0,
\]
where $f(x,\theta)$ is a smooth function, compactly supported in $x$. Note that
\[
\Delta_{g_s} = e^{-sf}\Delta_{g_0}.
\]

We will work with the function $A$ given in \cite{ChWu-lsm}. Note that if $f$ depends only on $x$, then after the perturbation our surface retains its rotational symmetry and so is still a surface of revolution, though it is impractical to write down its metric explicitly in the standard form for surfaces of revolution.

If our perturbation function $f$ has appropriate conditions placed on it, one expects that it will have little effect on the dynamics near the trapped set and thus little effect on the local smoothing. In fact, it is reasonable to expect that the perturbation could make the dynamics less stable and thus lead to greater local smoothing, though this is beyond our scope.

\begin{theorem}
Let $\epsilon > 0$ and let $M = \R_x \times \R_{\theta}/2\pi \Z$ endowed with the metric
\[
g = e^{sf(x,\theta)}(dx^2+A^{2}(x)d\theta^2),
\]
where 
\[
A(x) = (1+x^{2m})^{1/2m},
\]
$m \in \Z$, $m \geq 2$, 
and $f \in C^\infty(M)$ is compactly supported in $x$ and satisfies
\[
|\p_x^j\p_\theta^k f| \le C|x|^{2m-1}
\]
for $x$ small and $j, k \le N$ for sufficiently large $N = N(m, \epsilon)$ where $j + k \ge 1$. 
Let
\[
r = \frac{m}{m+1} + \epsilon.
\]

Then for $s>0$ sufficiently small, there exists $C_{T} > 0$ such that 
\[
\int_0^T (\|\la x \ra^{-1}\p_x u\|^2 + \| \la x \ra^{-3/2} \p_\theta u\|^2)\,dt \le C_{T}
\|u_0\|^2_{H^{r}}
\]
for all $u$ solving the Schr\"odinger equation
\[
\begin{cases}
(D_t-\Delta_g)u = 0\\
u|_{t=0} = u_0 \in \mathcal{S}.
\end{cases}
\]
\end{theorem}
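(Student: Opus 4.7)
The plan is to reduce the local smoothing estimate to a semiclassical resolvent bound and then to treat the conformal perturbation as a lower-order correction of the one-dimensional model operator analyzed in \cite{ChWu-lsm}. By the correspondence described in \cite{Chr-inf-deg}, the estimate in the theorem follows from a high-energy, spatially localized resolvent estimate
\[
\|\chi (-h^2\Delta_g - 1 \pm i0)^{-1}\chi\|_{L^2 \to L^2} \le C h^{-2r},
\]
uniform in $h \in (0, h_0]$, where $\chi$ is a cutoff in $x$. Since $\Delta_g = e^{-sf}\Delta_{g_0}$, multiplying on the right by $e^{sf}$ rewrites this as a bound for the non-self-adjoint operator $h^2\Delta_{g_0} + e^{sf}$, a small perturbation of $h^2\Delta_{g_0} + 1$.

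I would then Fourier expand both $u$ and $e^{sf}$ in $\theta$. Writing $e^{sf(x,\theta)} = \sum_j \hat f_j(x) e^{ij\theta}$ and $u = \sum_k u_k(x) e^{ik\theta}$, the problem becomes an infinite coupled system of one-dimensional equations: for each angular mode,
\[
h^2\Big(\p_x^2 + \tfrac{A'}{A}\p_x - \tfrac{k^2}{A^2}\Big) u_k + \sum_j \hat f_{k-j}(x) u_j = g_k.
\]
In the resonant regime $h|k| \sim 1$, the diagonal piece is, after rescaling, precisely the one-dimensional semiclassical Schr\"odinger operator with degenerate barrier maximum analyzed in \cite{ChWu-lsm}, whose resolvent satisfies the loss $h^{-2m/(m+1)}$. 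The hypothesis $|\p_x^j\p_\theta^k f| \le C|x|^{2m-1}$, together with integration by parts in $\theta$, implies that each coupling coefficient $\hat f_j(x)$ vanishes to order $2m-1$ at the trapped circle $\{x=0\}$ and decays in $j$ at a rate controlled by $N$.

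The heart of the argument is to show that the positive-commutator / escape-function construction of \cite{ChWu-lsm} survives this coupled perturbation. The key quantitative fact is that the perturbation is of size $O(s|x|^{2m-1})$ near the trapping, which is strictly lower order than the $O(|x|^{2m})$ barrier generated by $A^{-2}(x)$; for $s$ small this allows the perturbation to be absorbed by Cauchy--Schwarz into the principal positive term from the unperturbed commutator. The coupling between angular modes is handled by viewing the system on $\ell^2(\Z; L^2_x)$ and using the $j$-decay of $\hat f_j$ to show that the off-diagonal terms amount to a small, controllable correction. One then closes the resolvent bound and returns to the local smoothing estimate via the usual $TT^*$ / resolvent-to-smoothing dictionary.

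The main obstacle is precisely the loss of separation of variables: unlike in \cite{ChWu-lsm} and \cite{ChMe-lsm}, where each angular mode can be analyzed independently, the $\theta$-dependence of $f$ forces infinitely many coupled radial problems to be treated simultaneously. Controlling this coupling while preserving the sharp $h^{-2m/(m+1)}$ bound on each diagonal block is what necessitates the $\epsilon$ loss in $r = m/(m+1) + \epsilon$ and the requirement that sufficiently many derivatives of $f$ (up to order $N = N(m,\epsilon)$) satisfy the symbolic estimate.
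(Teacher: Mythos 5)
Your overall strategy---reduce to a cutoff resolvent estimate and then run a positive-commutator / escape-function argument that absorbs the perturbation for $s$ small---does match the paper's plan in spirit. However, the core technical move you propose is genuinely different from the paper's, and it contains a gap that is not filled.

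The paper's proof does \emph{not} Fourier decompose in $\theta$. The whole point of the paper, stated explicitly in the introduction and in the construction of the escape symbol, is that the $\theta$-dependence of $f$ destroys the separation into independent one-dimensional problems; instead, the paper works directly with two-dimensional pseudodifferential operators on $\R_x \times S^1_\theta$ in the hybrid symbol class $S^m_\rho$ (continuous $\xi$, discrete $\eta$, with $\rho = \epsilon > 0$), constructs an escape symbol $a = \chi(x)\chi(\xi\eta^{-1})\Lambda(x)\Lambda(\xi|\eta|^{-\epsilon})\tilde\chi(\eta)$ that lives simultaneously on all angular frequencies, and proves the required lower bound by computing $H_{e^{-sf}p}a$ and controlling the remainder in the $\rho=\epsilon$ calculus. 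Your proposal, by contrast, passes to a coupled system on $\ell^2(\Z; L^2_x)$ with interaction kernel $\hat f_{k-j}$. The step ``one then closes the resolvent bound'' is precisely where the work lies and it is not sketched: each diagonal block requires an escape function living at the $k$-dependent scale $x \sim |k|^{-1/(m+1)}$, and commuting a whole family of mode-dependent commutants against the off-diagonal coupling $\hat f_{k-j}(x)$ produces cross terms between different scales that cannot be dispatched by $j$-decay alone. You would have to show that the bilinear form generated by these mismatched escape functions is still sign-definite to leading order, which is exactly the issue the 2D symbol-class construction in the paper is designed to sidestep by never decoupling into modes in the first place. Without that, the proposal is not a proof.

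Two smaller points. First, the claim that the perturbation is ``$O(s|x|^{2m-1})$, which is strictly lower order than the $O(|x|^{2m})$ barrier'' is backwards: for $|x|$ small, $|x|^{2m-1} > |x|^{2m}$. What actually happens (and what the paper uses) is that in the commutator the perturbation term $s f_x$ gets multiplied by the extra factor $\Lambda(x) \sim x$ from the escape function, so it becomes $O(s|x|^{2m})$---the \emph{same} order as the positive term $A'(x)\Lambda(x) \sim x^{2m}$---and the absorption relies entirely on the prefactor $s$ being small, not on a scaling gain. Second, your reformulation moves $e^{sf}$ across to obtain the non-self-adjoint operator $h^2\Delta_{g_0} + e^{sf}$; the paper instead conjugates by $e^{sf/2}$ (and $A^{1/2}$) to obtain an explicitly self-adjoint $Q$, which keeps the commutator argument on firm ground. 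The non-self-adjoint route would need additional care that you have not supplied.
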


\begin{remark}
  We have assumed that the initial data $u_0 \in \mathcal{S}$ is
  Schwartz class to avoid any issues with integration by parts. 

In the unperturbed case, there is a gain of
\[
\frac{1}{m+1}
\]
derivatives, whereas in our case there is the gain of
\[
\frac{1}{m+1} - \epsilon
\]
derivatives. 
This is because we have chosen to avoid the marginal calculus used in \cite{ChWu-lsm}, in order to ensure gains (in terms of $\theta$ derivatives) in symbol expansions, so that the many extra terms introduced by the factor $e^{-sf}$ are easier to control.
\end{remark}

\begin{remark}
Note that we do not require any bound on $f$ itself, only on its
derivatives.  We have stated the theorem in terms of $s>0$, which is a
convenience just to avoid excessive notation of $|s|$ every time we estimate.  We have made no assumptions on the
sign of the function $f$.
\end{remark}

\begin{remark}
The intuitive reason for our condition on derivatives of $f$ is that in general the degenerate trapping found in the unperturbed manifold is unstable under perturbation, and could potentially be perturbed into much worse trapping, for which the result would not hold.

We also note that non-degenerate hyperbolic trapping is stable under perturbation, so there is no corresponding issue in that situation. 
\end{remark}

\section{{Notation and Preliminary Material}}

\subsection{Notations and Conventions}
We will use $C$ to denote a large constant which may change from line
to line. We will similarly use $c$ to denote a small positive constant
which may change from line to line.  We use the bracket notation:
\[
\la x \ra^s = (1 + x^2)^{s/2},
\]
for any $s \in \R$, which is a smooth positive function and $\la x \ra^s \sim |x|^s$ at infinity.  

\subsubsection{{Pseudodifferential Operators}}
Our outline of pseudodifferential operators will follow the
presentation of \cite{zw-book}, \cite{Tay-pdo}, and \cite{Tay2}.  For
the use of pseudodifferential operators on a circle with discrete
frequency parameter, we follow \cite{PDO-on-torus}.


We will work with the \emph{symbol classes} $S^m_{\rho}$, $\rho \ge 0$ originally defined in \cite{HoHyp}, given by
\[
S^m_{\rho} = \{a \in C^\infty(\R \times \R \times S^1 \times \Z)\ :\ |\p_\xi^\alpha \p^\beta_x \p_\theta^\gamma \p^\delta_\eta a| \le C_{\alpha,\beta,\delta,\gamma}\la \xi \ra^{m-|\alpha|\rho}\la \eta \ra^{-|\delta|\rho}\},
\]
where $\p_\eta$ denotes a difference operator in $\eta$, defined by
\[
\p_\eta a(\cdot , \eta) = a(\cdot, \eta +1) - a(\cdot, \eta).
\]
 In particular, we will work with a symbol supported only where $|\xi|
 \le C|\eta|$, allowing us to transfer decay in $|\eta|$ to decay in
 $|\xi|$.  We also point out that $\rho = 0$ is the ``marginal case''
 where derivatives gain nothing.  The marginal calculus was used in
 \cite{ChWu-lsm} to obtain a sharp result, but in this paper we will
 always have $\rho >0$.  It is unclear if this is necessary, or if the
 marginal calculus can be used in the present context.

Define
\[
 a^wu = \frac{1}{(2\pi)^2}\int_{\R^2}\int_{S^1}\sum_{\eta} e^{i\la x-\tilde{x},\xi\ra + i\la \theta-\tilde\theta, \eta\ra} a\left(\frac{x+\tilde{x}}{2}, \frac{\theta + \tilde\theta}{2}, \xi,\eta\right)u(\tilde{x}, \tilde{\theta})\,d\tilde{\theta}d\tilde{x}d\xi
\]
The operator $a^w$ is a \emph{pseudodifferential operator} obtained from taking the \emph{Weyl quantization} of $a$. It should be noted that the Weyl quantization is just one choice of many quantizations. The function $a$ is said to be the \emph{symbol} of the operator.

\subsubsection{Symbol calculus}

We review a few essential theorems of the symbol calculus.

\begin{theorem}[Calderon-Vaillancourt Theorem]
If $a \in S^0_0$ then the operator $a^w$ is bounded as an operator from $L^2$ to $L^2$.
\end{theorem}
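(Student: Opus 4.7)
The plan is to invoke the Cotlar–Stein almost orthogonality lemma, which is the canonical route to Calderon–Vaillancourt bounds in the marginal class $S^0_0$. The main obstacle is structural: because $\rho = 0$, differentiating the symbol in $\xi$ or $\eta$ produces no gain, so one cannot bound the Schwartz kernel of $a^w$ pointwise and then apply Schur's test directly. One must instead decompose $a$ into pieces whose Weyl quantizations are each bounded on $L^2$ uniformly, and then recombine them via almost orthogonality.

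First, I would construct a partition of unity $\sum_J \chi_J^2 \equiv 1$ on the phase space $\R_x \times S^1_\theta \times \R_\xi \times \Z_\eta$, where $J = (j_1, j_2, j_3, j_4)$ indexes unit cubes in the continuous coordinates and single points in the discrete $\eta$ coordinate. Setting $a_J = \chi_J a$, each $a_J$ has $S^0_0$ seminorms bounded uniformly in $J$ and is supported in a set of unit size in phase space. After translating the support to the origin, $a_J^w$ is the Weyl quantization of a Schwartz-class symbol with seminorms independent of $J$, and hence is bounded on $L^2$ with norm $\|a_J^w\|_{L^2 \to L^2} \le C$ uniform in $J$ by an elementary Schur estimate on the localized kernel.

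The heart of the argument is the almost orthogonality bound
\[
\|(a_J^w)^* a_K^w\|_{L^2 \to L^2} + \|a_J^w (a_K^w)^*\|_{L^2 \to L^2} \le C_N \langle J - K \rangle^{-N}
\]
for every $N$. This would follow from the Weyl composition formula together with integration by parts — the ordinary calculus in $(x, \theta, \xi)$ and the discrete difference calculus of \cite{PDO-on-torus} in $\eta$ — which converts the rapid phase oscillation between far-separated cubes into polynomial decay in $|J - K|$. Cotlar–Stein then yields $\|a^w\|_{L^2 \to L^2} \le C \sum_J \langle J \rangle^{-N/2} < \infty$, which is the desired conclusion. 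The most delicate point will be managing the mixed continuous-discrete calculus: the operator $\p_\eta$ is well-behaved only under summation by parts and must act consistently with the cutoffs $\chi_J$. Since each $\chi_J$ is supported in a single $\eta$-cube, however, a finite Taylor expansion in $\eta$ returns the argument to the classical Euclidean one, which is standard.
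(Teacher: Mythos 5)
The paper does not give its own proof of the Calderon-Vaillancourt theorem: it simply cites the original paper \cite{CV} and points to Theorem 4.23 of \cite{zw-book}. Your Cotlar--Stein outline is in fact the same method that the cited textbook reference uses, so in substance the two agree. The ideas are sound: decompose $a$ into unit-scale phase-space pieces with uniformly bounded $S^0_0$ seminorms, prove a uniform $L^2$ bound for each piece (via rapid decay of the localized Weyl kernel in $|x-y|$ followed by Schur's test), establish rapid off-diagonal decay of $\|(a_J^w)^*a_K^w\|$ and $\|a_J^w(a_K^w)^*\|$ via non-stationary phase in the Weyl composition integral, and close with Cotlar--Stein. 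Two small corrections are needed. First, your partition convention is inconsistent: if $\sum_J \chi_J^2 \equiv 1$, then $a_J = \chi_J a$ does not satisfy $\sum_J a_J = a$, so Cotlar--Stein would be applied to the wrong family. Either take $\sum_J \chi_J \equiv 1$ with $a_J = \chi_J a$, or keep $\sum_J \chi_J^2 \equiv 1$ and set $a_J = \chi_J^2 a$. Second, since $\theta \in S^1$ is already compact, no localization is required in the $\theta$ variable; the partition need only run over $(x,\xi,\eta)$, with the $\eta$ component already a single-lattice-point cutoff, which removes most of the mixed continuous-discrete delicacy you flag at the end.
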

This theorem is originally due to \cite{CV}. See Theorem 4.23 in \cite{zw-book} for another proof.

In fact, a more general theorem holds.
\begin{theorem}
If $a \in S^m_0$ then the operator $a^w(x,D)$ is bounded as an operator from $H^{s+m}$ to $H^{s}$.
\end{theorem}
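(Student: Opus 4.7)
The plan is to reduce the statement to the order-zero case, where the result is the Calderon-Vaillancourt theorem just stated. Define $\Lambda^r$ as the Fourier multiplier with symbol $\la(\xi,\eta)\ra^r$, which by construction is an isomorphism $H^{s+r}\to H^s$ for every $s\in\R$. Since
\[
a^w = \Lambda^{-s}\circ b^w\circ \Lambda^{s+m},\qquad b^w := \Lambda^s\circ a^w\circ \Lambda^{-s-m},
\]
it suffices to show $b^w$ is bounded on $L^2$, and by Calderon-Vaillancourt this follows once we verify $b\in S^0_0$.

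Establishing $b\in S^0_0$ requires the symbolic composition calculus on the hybrid phase space $\R_x\times\R_\xi\times S^1_\theta\times\Z_\eta$. I would combine the standard Weyl Moyal product in $(x,\xi)$ from \cite{zw-book} with the discrete symbolic calculus in $(\theta,\eta)$ from \cite{PDO-on-torus}; because the two channels act on independent variables, the hybrid Weyl product factors as their tensor product. Since the symbols of $\Lambda^r$ depend only on $(\xi,\eta)$, composition with $\Lambda^r$ produces, to leading order, multiplication of $a$ by $\la(\xi,\eta)\ra^r$, together with correction terms in which $\p_x^\alpha\p_\theta^\beta a$ is paired with $\xi$-derivatives and finite $\eta$-differences of $\la(\xi,\eta)\ra^r$. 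The $S^m_0$ symbol estimates on $a$ combine with the corresponding estimates on powers of $\la(\xi,\eta)\ra$ so that each term obeys the $S^{m+r}_0$ bounds, and choosing $r=s$ on the left and $r=-s-m$ on the right places $b$ in $S^0_0$.

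The main obstacle is that $\rho=0$ gives no genuine asymptotic gain: every term in the Moyal expansion has the same order as the leading one, so a formal truncation-plus-remainder argument does not close. I would circumvent this by working directly with the exact oscillatory-integral representation of the composed symbol rather than its formal asymptotic expansion, controlling the integrals by integration by parts in $(x,\xi)$ together with discrete summation by parts in $(\theta,\eta)$, and then verifying each required derivative bound uniformly in all variables. Once $b\in S^0_0$ is in place, a single application of Calderon-Vaillancourt to $b^w$ concludes the argument.
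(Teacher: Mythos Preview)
The paper does not actually prove this theorem; it is stated without proof as a standard extension of the Calderon--Vaillancourt theorem, so there is no ``paper's own proof'' to compare against.

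Your approach---conjugate by Bessel potentials $\Lambda^r$ to reduce to the $L^2$ case and then invoke Calderon--Vaillancourt---is the standard one and is correct. One remark: you identify as the ``main obstacle'' that for $\rho=0$ the Moyal expansion has no asymptotic gain, and you propose to bypass this by working directly with the oscillatory integral. That is fine, but in the context of this paper it is unnecessary: the composition theorem already quoted (Theorem~\ref{theorem-symb-exp}) asserts not merely a formal expansion but that the \emph{full} composed symbol $c=a\#b$ lies in $S^{m+\tilde m}_\rho$. Since $\la(\xi,\eta)\ra^r\in S^r_1\subset S^r_0$, two applications of that theorem with $\rho=0$ give $b=\la(\xi,\eta)\ra^s\#a\#\la(\xi,\eta)\ra^{-s-m}\in S^0_0$ directly, and Calderon--Vaillancourt finishes. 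So the obstacle you flag is real at the level of asymptotic expansions but is already absorbed into the composition result the paper is taking for granted; you can simply cite it rather than redo the oscillatory-integral analysis.
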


Quantization does not commute with composition. That is to say, the composition of two pseudodifferential operators is not the quantization of the product of their symbols. In fact, it is not immediately obvious that the composition of two pseudodifferential operators is a pseudodifferential operator. In fact, the following theorem holds.

\begin{theorem}[Theorem 4.18 in \cite{zw-book}]\label{theorem-symb-exp}\label{symb-1}
Let $a \in S^m_{\rho}$, $b \in S^{\tilde{m}}_{\rho}$. Let
\[
A(D) = \frac{1}{2}(\la D_\xi, D_y\ra - \la D_x,D_\eta\ra).
\]
Then
\[
a^w(x,D) \circ b^w(x,D) = c^w(x,D)
\]
for
\[
c = a\#b := \sum_{k=0}^N \frac{i^k}{k!}A(D)^ka(x,\xi)b(y,\eta) \biggr|_{x = y, \xi = \eta} + r,
\]
where $r$ is a symbol in $S^{m+\tilde{m}-N\rho}_{\rho}$. Furthermore, the symbol $c$ is in the class $S_\rho^{m+\tilde{m}}$.
\end{theorem}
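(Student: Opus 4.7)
The plan is to establish the formula by direct computation of the composition, followed by an asymptotic expansion of the resulting oscillatory integral/sum. I would begin by substituting the Weyl quantization formula into itself: write $b^w u$ as in the definition of $b^w$, then apply $a^w$ to the result, and use Fourier inversion (continuous in $\xi$ and discrete in $\eta$) to collapse the intermediate position variables. After a change of variables centered on the midpoints $(x+\tilde x)/2$ and $(\theta+\tilde\theta)/2$, the composition takes the form $c^w u$ with
\[
c(x,\theta,\xi,\eta) = \frac{1}{(2\pi)^2}\iint \sum_{\eta'} e^{-2i(\langle y,\zeta\rangle + \langle \theta',\eta'\rangle)} a(x+y,\theta+\theta',\xi+\zeta,\eta+\eta')\, b(x-y,\theta-\theta',\xi-\zeta,\eta-\eta')\, dy\, d\zeta\, d\theta'.
\]
This is the standard Weyl composition integral adapted to the mixed $\R\times S^1$ phase space.

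Next, I would extract the asymptotic expansion by Taylor expanding the amplitude in the displacement variables $(y,\zeta,\theta',\eta')$ about the origin. Integrating/summing the polynomial terms against the oscillatory factor using Fourier inversion identifies the formal series with $e^{iA(D)}[a(x,\xi,\theta,\eta)b(y,\zeta,\tilde\theta,\eta')]$ restricted to the diagonal $x=y$, $\xi=\zeta$, $\theta=\tilde\theta$, $\eta=\eta'$. The first $N$ terms of the Taylor expansion of $e^{iA(D)}$ produce exactly the sum in $a\#b$; each application of $A(D)$ pairs one position derivative with one frequency derivative or difference, so by the symbol hypotheses on $a$ and $b$ the $k$-th term gains a factor $\la\xi\ra^{-k\rho}$ or $\la\eta\ra^{-k\rho}$ and lies in $S^{m+\tilde m - k\rho}_\rho$ via the Leibniz rule.

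The main technical work is the remainder estimate $r \in S^{m+\tilde m - N\rho}_\rho$. Starting from the Taylor remainder in integral form, I would integrate by parts in the continuous variables $y,\zeta$ using the operators $(1+|\zeta|^2)^{-1}(1-\Delta_y)$ and its dual, and perform the analogous summation by parts in the discrete variable $\eta'$ using the difference operator $\p_{\eta'}$ against the oscillation $e^{-2i\theta'\eta'}$. Every integration or summation by parts transfers a derivative or difference onto the amplitude, where the symbol estimates on $a$ and $b$ pay an extra $\la\xi\ra^{-\rho}$ or $\la\eta\ra^{-\rho}$ factor. A careful bookkeeping of these gains, combined with the dyadic localization in $\xi,\eta$ forced by the oscillation, yields the full set of symbol estimates on $r$ in both the continuous and discrete frequency variables.

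The principal obstacle is the mixed continuous--discrete nature of the calculus: the difference operator $\p_\eta$ appearing in the symbol class definition is not a genuine derivative, and one must interpret the $D_\eta$ appearing in $A(D)$ consistently in the discrete setting. The standard workaround is either to invoke Poisson summation to lift sums over $\eta\in\Z$ to integrals on $\R$ and reduce directly to the Euclidean Theorem 4.18 of \cite{zw-book}, or to develop the discrete calculus directly with Abel summation playing the role of integration by parts. Once this translation is in place, the remaining estimates parallel the Euclidean proof verbatim.
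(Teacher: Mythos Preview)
The paper does not prove this theorem: it is stated as a review of standard symbol calculus and referenced directly to Theorem~4.18 in \cite{zw-book}, with the discrete-$\eta$ aspects implicitly deferred to \cite{PDO-on-torus}. So there is no in-paper proof to compare against.

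Your sketch is essentially the standard proof from \cite{zw-book}: write the composition as an oscillatory integral, recognize it as $e^{iA(D)}$ applied to $a\otimes b$ and restricted to the diagonal, Taylor expand, and control the integral-form remainder by repeated integration by parts against the oscillation. You correctly identify the one nontrivial adaptation, namely that $\eta\in\Z$ forces $\partial_\eta$ to be a difference operator and $D_\eta$ in $A(D)$ must be interpreted accordingly; your proposed fixes (Poisson summation to lift to $\R$, or Abel summation in place of integration by parts) are both viable and standard. The only point I would flag is that the paper's symbol class imposes \emph{separate} gains in $\la\xi\ra$ and $\la\eta\ra$ under $\partial_\xi$ and $\partial_\eta$ respectively, so when you do the bookkeeping on the remainder you should track the two frequency variables independently rather than lumping them into a single $\la(\xi,\eta)\ra$ weight; this is routine but worth making explicit since the mixed calculus is not quite the isotropic one in \cite{zw-book}.
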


In particular, we have the following Corollary.
\begin{corollary}
Let $a \in S^m_{\rho}$, $b \in S^{\tilde{m}}_{\rho}$. Then
\[
a\#b = ab + \frac{1}{2i}\{a,b\} + r,
\]
where $r \in S^{m+\tilde{m} - 2\rho}_{\rho}$.
\end{corollary}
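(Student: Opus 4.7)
The statement is essentially an unpacking of Theorem~\ref{theorem-symb-exp} with $N = 2$. The plan is therefore to invoke that theorem, compute the first two terms explicitly, and collect the rest into the remainder.

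First, I would apply Theorem~\ref{theorem-symb-exp} with $N=2$, which produces
\[
a\#b \;=\; \sum_{k=0}^{1}\frac{i^k}{k!}A(D)^{k}a(x,\xi)b(y,\eta)\Big|_{x=y,\,\xi=\eta}\;+\;r,
\]
with the remainder $r$ lying in $S^{m+\tilde m-2\rho}_{\rho}$. This already gives the order claimed for $r$, so the proof reduces to identifying the two explicit terms on the right.

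Next, I would compute the $k=0$ and $k=1$ contributions. For $k=0$, the diagonal restriction of $a(x,\xi)b(y,\eta)$ is simply $ab$, which lies in $S^{m+\tilde m}_{\rho}$ by the Leibniz rule applied to the symbol seminorms. For $k=1$, expand
\[
iA(D) \;=\; \tfrac{i}{2}\bigl(\langle D_\xi,D_y\rangle-\langle D_x,D_\eta\rangle\bigr)
\]
acting on the product $a(x,\xi)b(y,\eta)$, use $D=\tfrac{1}{i}\partial$ to convert the $D$'s into ordinary derivatives, and restrict to $x=y$, $\xi=\eta$. Because the phase space has two pairs of conjugate variables $(x,\xi)$ and $(\theta,\eta)$ (the latter with discrete frequency, handled by the difference/derivative calculus of \cite{PDO-on-torus}), both pairs contribute symmetrically, and the result is
\[
\tfrac{1}{2i}\bigl(\partial_\xi a\,\partial_x b-\partial_x a\,\partial_\xi b+\partial_\eta a\,\partial_\theta b-\partial_\theta a\,\partial_\eta b\bigr)\;=\;\tfrac{1}{2i}\{a,b\}.
\]
Since each $\partial_\xi$ or $\partial_\eta$ derivative lowers the order by $\rho$, this term sits in $S^{m+\tilde m-\rho}_{\rho}$, one order above the remainder.

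There is no serious obstacle here; the argument is pure bookkeeping. The only small care needed is to track the factors of $\tfrac{1}{2}$ and $i$ coming from the definition of $A(D)$, and to verify that the $\partial_\eta$ "derivative" (really a difference operator on $\Z$) obeys the Leibniz-type rule needed for the Poisson-bracket identification up to a symbol of order $m+\tilde m-2\rho$. The latter is exactly the content of the discrete symbol calculus cited above, and any discrepancy between the difference operator and a true derivative is absorbed into the remainder $r$ without changing its order.
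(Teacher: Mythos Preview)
Your proposal is correct and follows the same route as the paper, which simply remarks that the corollary is a particular case of Theorem~\ref{theorem-symb-exp}. One trivial bookkeeping point: with the paper's indexing $\sum_{k=0}^{N}$, taking $N=2$ literally also produces a $k=2$ term, but that term already lies in $S^{m+\tilde m-2\rho}_{\rho}$ and is absorbed into $r$, so your reading is fine.
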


This can be seen from the symbol expansion for the commutator of $a^w$ and $b^w$ using Theorem \ref{theorem-symb-exp}. Due to the symmetry of the Weyl quantization, the following holds.
\begin{corollary}
Let $a \in S^m_{\rho}$, $b \in S^{\tilde{m}}_{\rho}$. Then the commutator
\[
[a^w(x,D), b^w(x,D)] = c^w(x,D),
\]
where
\[
c = \frac{1}{i}\{a,b\} + r,
\]
and $r \in S^{m + \tilde{m} - 3\rho}_{\rho}$.
\end{corollary}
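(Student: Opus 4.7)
The plan is to derive the commutator formula directly from Theorem \ref{theorem-symb-exp} by exploiting the antisymmetry of the bidifferential operator $A(D)$ under the swap of the two sets of phase-space variables. The preceding corollary already gives the commutator modulo $S^{m+\tilde m-2\rho}_\rho$; what is needed here is to see that Weyl symmetry forces the would-be $k=2$ contribution to drop out, buying one extra factor of $\rho$.

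First I would apply Theorem \ref{theorem-symb-exp} with $N=3$ to both $a\#b$ and $b\#a$. This yields, modulo $S^{m+\tilde m-3\rho}_\rho$, the two expansions
\[
a\#b\equiv\sum_{k=0}^{3}\frac{i^k}{k!}\,A(D)^k\bigl(a(x,\xi)b(y,\eta)\bigr)\Big|_{x=y,\,\xi=\eta},
\]
\[
b\#a\equiv\sum_{k=0}^{3}\frac{i^k}{k!}\,A(D)^k\bigl(b(x,\xi)a(y,\eta)\bigr)\Big|_{x=y,\,\xi=\eta}.
\]
Next, introduce the involution $\sigma\colon(x,\xi,y,\eta)\mapsto(y,\eta,x,\xi)$ and observe that, from the explicit form $A(D)=\tfrac12(\langle D_\xi,D_y\rangle-\langle D_x,D_\eta\rangle)$, the pullback satisfies $\sigma^{\ast}A(D)=-A(D)$. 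Since $b(x,\xi)a(y,\eta)=[a(x,\xi)b(y,\eta)]\circ\sigma$ and the diagonal $\{x=y,\ \xi=\eta\}$ is fixed by $\sigma$, a short induction on $k$ gives the key parity identity
\[
A(D)^k\bigl(b(x,\xi)a(y,\eta)\bigr)\Big|_{x=y,\,\xi=\eta}=(-1)^k\,A(D)^k\bigl(a(x,\xi)b(y,\eta)\bigr)\Big|_{x=y,\,\xi=\eta}.
\]

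Subtracting the two expansions, the $k=0$ and $k=2$ contributions cancel while the $k=1$ and $k=3$ contributions double. A direct calculation gives $A(D)(a(x,\xi)b(y,\eta))|_{\mathrm{diag}}=-\tfrac12\{a,b\}$, so the $k=1$ piece contributes $2i\cdot(-\tfrac12)\{a,b\}=\tfrac{1}{i}\{a,b\}$, which is the claimed principal symbol. The $k=3$ piece is $\tfrac{2i^3}{3!}A(D)^3\bigl(a(x,\xi)b(y,\eta)\bigr)|_{\mathrm{diag}}$; each application of $A(D)$ pairs a momentum derivative (which lowers order by $\rho$) with a position derivative (which does not change order), so after three applications this piece lies in $S^{m+\tilde m-3\rho}_\rho$ and can be absorbed into the remainder together with the two $S^{m+\tilde m-3\rho}_\rho$ error terms supplied by Theorem \ref{theorem-symb-exp}.

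The only step requiring real care is the parity identity for $A(D)^k$ after composition with $\sigma$; the inductive step uses that for any constant-coefficient differential operator $L$ one has $L(F\circ\sigma)=(\sigma^{\ast}L\,F)\circ\sigma$, combined with $\sigma^{\ast}A(D)=-A(D)$ and the $\sigma$-invariance of the diagonal. Once this Weyl-symmetry observation is in place, the rest of the proof is routine symbolic bookkeeping.
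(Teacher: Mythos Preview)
Your proof is correct and follows exactly the route the paper points to: expand $a\#b$ and $b\#a$ via Theorem \ref{theorem-symb-exp}, then use the Weyl symmetry $\sigma^*A(D)=-A(D)$ to kill the even-$k$ terms in the difference, so that only $k=1$ survives modulo $S^{m+\tilde m-3\rho}_\rho$. The paper itself does not write out a proof but merely notes that the extra gain comes from ``the symmetry of the Weyl quantization'' and refers to Theorem 4.12 in \cite{zw-book}; your argument is precisely the standard justification behind that remark.
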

Note that we gain $3$ in the symbol class of the remainder term, rather than the gain of $2$ we may naively expect. See Theorem 4.12 in \cite{zw-book}.

Another useful feature of the Weyl quantization is the following theorem. 
\begin{theorem}
Let $a$ be a real symbol. Then the operator $a^w$ is essentially self-adjoint.
\end{theorem}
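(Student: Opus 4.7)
The plan is to split essential self-adjointness into two parts: first establish that $a^w$ is symmetric on the dense domain $\mathcal{S}$, and then show that the deficiency indices vanish, i.e., $\ker((a^w)^* \mp i) = \{0\}$.

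For the symmetry step, I would prove that $(a^w)^* = \bar{a}^w$ on $\mathcal{S}$, which when specialized to real $a$ gives $(a^w)^* = a^w$. To see this, I would insert the oscillatory integral definition of $a^w$ into the pairing $\la a^w u, v\ra_{L^2(M)}$ for $u,v\in\mathcal{S}$, move the complex conjugate inside, and then perform the sign change $(\xi,\eta)\to(-\xi,-\eta)$ simultaneously with swapping the labels $(x,\theta)\leftrightarrow(\tilde x,\tilde\theta)$. The crucial feature of the Weyl quantization is that the symbol is evaluated at the midpoint $\frac{x+\tilde x}{2}, \frac{\theta+\tilde\theta}{2}$, which is invariant under this swap, so the expression returns exactly $\la u, \bar a^w v\ra$.

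For the deficiency-index step, it suffices to show that $(a^w\pm i)\mathcal{S}$ is dense in $L^2$. I would build an approximate inverse using Theorem \ref{theorem-symb-exp}. Since $a$ is real, the symbol $b:=(a+i)^{-1}$ satisfies $|b|\le 1$ and, by induction on the number of derivatives combined with the chain rule applied to the continuous derivatives in $(x,\xi,\theta)$ and the discrete difference operator $\p_\eta$, belongs to $S^{-m}_\rho$ whenever $a\in S^m_\rho$. The composition theorem then yields $b^w\circ(a^w+i) = I + R$ with $R = r^w$ and $r\in S^{-2\rho}_\rho$. Iterating via the formal Neumann series produces an improved parametrix $b_N^w$ with remainder $R_N$ whose symbol lies in $S^{-N\rho}_\rho$; by Calderon-Vaillancourt (applied at negative order), the $L^2$ operator norm of $R_N$ can be made arbitrarily small. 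Therefore $I+R_N$ is invertible on $L^2$, which gives $(a^w+i)\mathcal{S}$ dense in $L^2$. Repeating the argument with $+i$ replaced by $-i$ forces both deficiency indices to vanish.

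The main technical obstacle is the symbol-class bookkeeping in the parametrix step: verifying that the reciprocal $1/(a+i)$ genuinely lies in $S^{-m}_\rho$ requires carefully tracking how the discrete operator $\p_\eta$ interacts with a rational function of $a$, since $\p_\eta$ is a difference rather than a true derivative and does not satisfy a clean Leibniz rule. Once one establishes the analog of the quotient/chain rule for $\p_\eta$ and checks that the $\la\eta\ra^{-|\delta|\rho}$ weights compound correctly through the composition $\#$, both the symmetry and the essential self-adjointness follow from the ingredients already laid out in the excerpt.
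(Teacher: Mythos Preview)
The paper does not prove this statement; it is recorded without proof among the standard background facts on the Weyl calculus (alongside Calder\'on--Vaillancourt, the composition formula, and G\aa rding's inequality), so there is no argument in the text to compare your proposal against.

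On its own merits, your symmetry step is correct and is the standard computation exploiting the midpoint evaluation in the Weyl kernel. The deficiency-index step, however, contains two genuine gaps. First, the claim that $b=(a+i)^{-1}\in S^{-m}_\rho$ tacitly assumes ellipticity: from $|a+i|^2=a^2+1\ge 1$ you only get $|b|\le 1$, i.e.\ $b\in S^0_\rho$; to obtain $|b|\le C\la\xi\ra^{-m}$ you would need $|a|\ge c\la\xi\ra^{m}$ for large $|\xi|$, which a general real symbol of order $m$ need not satisfy. Second, and more seriously, the assertion that the $L^2$ operator norm of $R_N$ can be made arbitrarily small because its symbol lies in $S^{-N\rho}_\rho$ is false. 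Calder\'on--Vaillancourt bounds $\|R_N\|$ by finitely many symbol seminorms, and negative order does not force those seminorms to be small: a fixed $C_c^\infty$ bump in the frequency variables lies in $S^{-\infty}$ yet quantizes to an operator of norm $1$. Hence $I+R_N$ need not be invertible and the Neumann-series inversion fails as written. The usual repair keeps the parametrix but uses it for \emph{regularity} rather than norm-smallness: if $v\in L^2$ satisfies $((a^w)^*\mp i)v=0$, the identity $b^w(a^w\mp i)=I+R$ with $R$ gaining Sobolev regularity lets you bootstrap $v$ into the domain of $a^w$, after which symmetry forces $\la a^w v,v\ra\in\R$, contradicting $a^w v=\pm iv$ unless $v=0$.
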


A final result we require is the G\aa rding inequality.
\begin{theorem}
Let $a \in S^m_{\rho}$ with $0 \le \rho \le 1$ and suppose
\[
\re a \ge C|(\xi,\eta)|^m
\]
for $|(\xi,\eta)|$ large. Then for any $m \in \R$ there exist $C_1, C_2$ such that for all $u \in H^{m/2}$,
\[
\re \la a^wu,u\ra \ge C_1\|u\|_{H^{m/2}}^2 - C_2\|u\|_{H^{(m-\rho)/2}}^2.
\]
\end{theorem}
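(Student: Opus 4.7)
The plan is a positive commutator argument adapted to the conformally perturbed Laplacian, in the spirit of \cite{ChWu-lsm} but carried out globally on the cylinder (since $f$ depends on $\theta$, a mode-by-mode reduction is not available) and within the non-marginal symbol classes $S^m_\rho$ with $\rho>0$. The non-marginal calculus costs an $\epsilon$-loss in the derivative gain, but produces a gain of $\langle\eta\rangle^{-\rho}$ at every step of the symbolic expansion, which is essential in order to swallow the many extra error terms generated by the conformal factor $e^{-sf}$.

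First I would rewrite the equation in a convenient self-adjoint form. The Laplacian $\Delta_g = e^{-sf}\Delta_{g_0}$ is self-adjoint on $L^2(e^{sf}A(x)\,dx\,d\theta)$; conjugating by $e^{sf/2}A(x)^{1/2}$ produces an equivalent Schr\"odinger equation $D_t v = P v$ with $P$ an honest self-adjoint, second-order elliptic pseudodifferential operator on the standard $L^2(dx\,d\theta)$. Its principal symbol is
\[
p_2(x,\theta,\xi,\eta) = e^{-sf(x,\theta)}\bigl(\xi^2 + A^{-2}(x)\eta^2\bigr),
\]
and all lower-order terms generated by the conjugation are supported in $\supp f \cup \supp(A'/A)$, hence compactly in $x$.

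Next I would construct an escape symbol $q \in S^{2r}_\rho$ with $\rho = 1/(m+1) - c\epsilon$, microlocally supported where $|\xi|\le C|\eta|$ so that $\langle\eta\rangle^{-1}$ plays the role of a semiclassical parameter. The symbol $q$ is modeled on the one in \cite{ChWu-lsm}: a monotone radial piece multiplied by a cutoff $\chi(x\langle\eta\rangle^{1/(m+1)})$ localizing at the degenerate trap at $x=0$, times the prefactor $\langle\eta\rangle^{2r}$. It is designed so that the unperturbed Poisson bracket satisfies
\[
\bigl\{\xi^2+A^{-2}(x)\eta^2,\,q\bigr\} \geq c\bigl(\langle x\rangle^{-2}\xi^2 + \langle x\rangle^{-3}\eta^2\bigr)\,\chi_0(x,\xi,\eta),
\]
modulo contributions supported where $P$ is microlocally elliptic and a priori controlled. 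Computing the time derivative of $\langle q^w v, v\rangle$ and expanding $[P,q^w]$ via Theorem \ref{theorem-symb-exp}, one obtains the principal contribution $(1/i)\{p_2,q\}^w$ together with remainders of strictly lower symbolic order. The principal contribution splits as $e^{-sf}\{\xi^2+A^{-2}\eta^2,q\}$ plus cross terms containing factors $s\,\partial^{\alpha}f$, which by the hypothesis $|\partial_x^j\partial_\theta^k f|\le C|x|^{2m-1}$ vanish at the trap exactly at the rate at which the main positive term degenerates.

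The main obstacle is controlling this sea of error terms. Each higher-order Moyal term costs a factor $\langle\eta\rangle^{-\rho}$ and at most one extra $x$- or $\theta$-derivative of $f$; the hypothesized vanishing of $\partial^{\alpha}f$ to order $2m-1$ at $x=0$ matches the weight structure of the main positive symbol (after the natural rescaling $x \mapsto x\langle\eta\rangle^{1/(m+1)}$), so provided $s$ is small and the expansion is truncated at $N = N(m,\epsilon)$ large enough, every perturbative remainder is absorbable into a small fraction of the positive commutator. Applying the G\aa rding inequality to the resulting non-negative symbol yields
\[
\re\,\tfrac{d}{dt}\langle q^w v, v\rangle \geq c\bigl(\|\langle x\rangle^{-1}\partial_x v\|^2 + \|\langle x\rangle^{-3/2}\partial_\theta v\|^2\bigr) - C\|v\|^2_{H^{r-\delta}},
\]
and integrating over $[0,T]$, using $|\langle q^w v, v\rangle|\lesssim \|v\|^2_{H^r}$ and $L^2$ conservation, produces the desired estimate; interpolation between $L^2$ and $H^r$ absorbs the $H^{r-\delta}$ remainder. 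Undoing the conjugation of the first step is a bounded zeroth-order equivalence and transfers the estimate from $v$ back to $u$.
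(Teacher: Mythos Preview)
Your proposal does not address the stated theorem. The statement you were asked to prove is the G\aa rding inequality for the symbol classes $S^m_\rho$: if $\re a \ge C|(\xi,\eta)|^m$ for large $(\xi,\eta)$, then $\re\langle a^w u,u\rangle \ge C_1\|u\|_{H^{m/2}}^2 - C_2\|u\|_{H^{(m-\rho)/2}}^2$. This is a standard result from microlocal analysis, and the paper does not prove it at all---it simply cites Chapter~7, Theorem~6.1 of \cite{Tay2}. A proof would proceed by writing $a = b\#\bar b + r$ with $b\in S^{m/2}_\rho$ and $r\in S^{m-\rho}_\rho$ (using the ellipticity hypothesis to take an approximate square root at the symbol level), whence $\re\langle a^w u,u\rangle = \|b^w u\|^2 + \re\langle r^w u,u\rangle$ and the two terms give the $H^{m/2}$ and $H^{(m-\rho)/2}$ contributions respectively.

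What you have written instead is a sketch of the proof of the paper's \emph{main theorem} (the local smoothing estimate for the conformally perturbed surface). That sketch is broadly in the right spirit, and indeed even \emph{uses} the G\aa rding inequality as a black box at the step where you pass from a non-negative symbol to a lower bound on the quadratic form. But it is not a proof of the G\aa rding inequality itself: nowhere do you explain why a symbol satisfying $\re a \ge C|(\xi,\eta)|^m$ at infinity quantizes to an operator with the claimed lower bound.
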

See Chapter 7, Theorem 6.1 in \cite{Tay2} for a proof.

\section{{Positive Commutator}}

The Laplacian $\Delta_{g_0}$ on the unperturbed metric is given by
\[
\Delta_{g_0} = \p_x^2+A^{-2}(x)\p_\theta^2+A^{-1}(x)A'(x)\p_x.
\]
Define
\[ L_1: L^2(X, dVol) \to L^2(X,dxd\theta)\]
by
\[ L_1u(x,\theta) = A^{1/2}(x)u(x,\theta)\]
and define
\[
L_2: L^2(e^{sf}dxd\theta) \to L^2(dxd\theta)
\]
by
\[
L_2u(x,\theta) = e^{sf/2}u.
\]
Let $\tilde{\Delta} = L_2L_1\Delta_g L_1^{-1}L_2^{-1}$. Let
\[
V_1(x) = \frac{1}{2}A''(x)A^{-1}(x)-\frac{1}{4}(A'(x))^2A^{-2}(x)
\]
We compute $\tilde{\Delta}$ explicitly and find
\begin{align*}
\tilde{\Delta} u &= e^{-sf/2} \left( \p_x^2 + A^{-2} \p_\theta^2 - V_1(x) \right) e^{-sf/2} \\
&= e^{-sf}\left(\p_x^2+ A^{-2}\p_\theta^2\right) \\
& \quad +e^{-sf}(-sf_x\p_x - A^{-2}sf_{\theta}\p_\theta -
(s/2)f_{xx}+((s/2)f_x)^2
)\\
& \quad +e^{-sf}(
- A^{-2}(s/2)f_{\theta\theta} + A^{-2}((s/2)f_\theta)^2)\\
&\quad - e^{-sf}V_1(x).
\end{align*}

We note that
\begin{align*}
(e^{-sf}(\xi^2 + A^{-2}(x)\eta^2 + V_1(x)))^w &= -\tilde\Delta
\end{align*}

Let
\[
Q = (e^{-sf}(\xi^2+A^{-2}\eta^2))^w
\]
and
\[
R = -e^{-sf}(-sf_x\p_x - A^{-2}sf_{\theta}\p_\theta -
(s/2)f_{xx}+((s/2)f_x)^2 - A^{-2}(s/2) f_{\theta\theta} + A^{-2}((s/2)f_\theta)^2)
\]
so that 
\[
Q = - e^{-sf}(\p_x^2 + A^{-2}\p_\theta^2) + R.
\]
Then $Q$ is essentially self-adjoint and $R$ consists of the lower order parts of the operator.

Below we will commute with an operator $B$ involving only $1$
derivative. Commuting $B$ and $e^{-sf}V_1(x)$ will produce a bounded
function and no derivatives, or in other words an $L^2$ bounded
operator.  This can then easily be absorbed into the upper bound of $\|u_0\|_{H^{1/2}}^2$, as will be done with many other remainder terms below. Thus proving the result for $Q$ will prove the result for $\tilde{\Delta}$. Conjugating back then proves the result for $\Delta_g$. For this reason, we will leave out $V_1(x)$ in the computations below and work with $Q$.

We begin by making the same positive commutator argument as in \cite{ChWu-lsm}. By commuting the operator we are interested in, $Q$, with an appropriate operator $B$ we are able to prove the local smoothing estimate away from $x = 0$.

For our commutant we choose
\[
B = \arctan(x)\p_x.
\]
We begin by commuting the two operators to find
\begin{align}\label{com-exp}
[Q,B] &= -e^{-sf}(\p_x^2+A^{-2}\p_\theta^2)[\arctan(x)\p_x] \\\notag
&\quad+\arctan(x)\p_x[e^{-sf}(\p_x^2+A^{-2}\p_\theta^2)] + [R,B]\\\notag
&=-e^{-sf}\biggr[2\la x\ra^{-2}\p_x^2 - \frac{2x}{(1+x^2)^{2}}\p_x +sf_x\arctan(x)(\p_x^2+A^{-2}\p_\theta^2)\\\notag
&\quad +\arctan(x)2A'A^{-3}\p_\theta^2\biggr] + [R,B].
\end{align}

Now that we are done with the preliminary computations, we begin the argument proper by assuming that $u$ satisfies the Schr\"odinger equation
\[
\begin{cases}
(D_t+Q)u = 0,\\
u(0,x,\theta) = u_0 \in \mathcal{S}.
\end{cases}
\]

Using that, we write down the following expression which equals $0$:
\[
0 = \int_0^T \la B(D_t+Q)u,u\ra - \la Bu,(D_t+Q)u\ra\,dt.
\]
In order to make our commutator term appear, we next need to integrate by parts in the second term and obtain
\[
0 = \int_0^T\la B(D_t+Q)u,u\ra - \la (D_t+Q)Bu,u\ra\,dt +\left.i\la Bu,u\ra\right|_0^T.
\]
We combine the terms involving $D_t$, $Q$, and $B$. This results in
\[
0 = \int_0^T\la B(D_t+Q) - (D_t+Q)Bu,u\ra\,dt + \left.i\la Bu,u\ra\right|_{0}^T.\]

Finally, we note that these combined terms are precisely the commutator we computed above, and we end up with the equation
\begin{equation}
  \label{E:comm-101}
 \int_0^T\la [Q,B]u,u\ra = \left.i\la Bu,u\ra\right|_0^T.
\end{equation}

Next we write out the commutator in \eqref{E:comm-101} and move the
largest, 
highest order
terms  to the left hand side.   That is, we move the terms with zero
or one derivative to the right hand side, as well as terms multiplied
by $s$.  
This results in the equation
\begin{align}
\int_0^T &\la -e^{-sf} 2\la x\ra^{-2}\p_x^2u,u\ra - \la
e^{-sf}\arctan(x)2A'A^{-3}\p_\theta^2u,u\ra\,dt\notag \\
&= - \int_0^T \la e^{-sf}\left[\frac{2x}{(1+x^2)^{2}}\p_x
  +sf_x\arctan(x)(\p_x^2+A^{-2}\p_\theta^2) + [R,B]\right]u,u\ra\,dt
\notag \\
&\quad + \left.i\la Bu,u\ra\right|_0^T. \label{E:comm-102}
\end{align}

We begin by working on the left hand side of \eqref{E:comm-102}.
Starting with the
first term on the left hand side of \eqref{E:comm-102} involving derivatives of $x$, we first integrate by parts to find
\begin{equation}
  \label{E:comm-103}
-\la e^{-sf}2\la x \ra^{-2}\p_x^2u,u\ra = \la \p_xu,(\p_x[2e^{-sf}\la x \ra^{-2}u]\ra.
\end{equation}
Next we use the product rule to find that \eqref{E:comm-103} equals
\[ \|e^{-sf/2}\la x\ra^{-1}\p_x u\|^2 + \la \p_x u, \left(-2sf_xe^{-sf}\la x \ra^{-2} - \frac{4xe^{-sf}}{(1+x^2)^{2}}\right)u\ra.
\]
The first term here is the highest order giving an $H^1$ norm.  
We move the second term to the right hand side of \eqref{E:comm-102} and bound it above. First we note that the function
\[
-2sf_xe^{-sf}\la x \ra^{-2} - \frac{4xe^{-sf}}{(1+x^2)^2}
\]
and all of its derivatives are bounded. We can then split the $\p_x$
across both parts of the inner product and obtain an upper bound of
$C\|u\|_{H^{1/2}}^2$ as follows: First we apply the operator $\la
D_x\ra^{1/2}\la D_x\ra^{-1/2}$, and then we use integration by parts.

This term then equals
\begin{align}
  & \la \p_x u, \left(-2sf_xe^{-sf}\la x \ra^{-2} -
  \frac{4xe^{-sf}}{(1+x^2)^{2}}\right)u\ra \notag \\
  & \quad = 
\la \la D_x\ra^{-1/2}\p_x u, \left(\la D_x\ra^{1/2}((-2sf_xe^{-sf}\la
x \ra^{-2} - \frac{4xe^{-sf}}{(1+x^2)^{2}}\right)u)\ra. \label{E:comm-104}
\end{align}
Using the Cauchy-Schwarz inequality we are able to bound \eqref{E:comm-104} from above by
\[
C\|\la D_x\ra^{-1/2}\p_x u\|_{L^2} \left\|\left(\la
D_x\ra^{1/2}((-2sf_xe^{-sf}\la x \ra^{-2} -
\frac{4xe^{-sf}}{(1+x^2)^{2}}\right)u)\right\|_{L^2} \leq C\|u\|^2_{H^{1/2}}.
\]

Next we move on to the term in \eqref{E:comm-102} involving derivatives of $\theta$ and proceed similarly. We have
\begin{align}
-&\la e^{-sf}\arctan(x)A'A^{-3}\p_\theta^2u,u\ra \notag \\
&\quad =\la e^{-sf}\arctan(x)x^{2m-1}(1+x^{2m})^{-1/m - 1}\p_\theta
  u,\p_\theta u\ra \notag \\
&\quad \quad + s\la f_\theta e^{-sf}\arctan(x)x^{2m-1}(1+x^{2m})^{-1/m
    - 1}\p_\theta u,u\ra. \label{E:comm-105}
\end{align}
The last term in \eqref{E:comm-105} involving only a single $\theta$
derivative is controlled by $C\|u\|_{H^{1/2}}^2$, just as we
did for the terms involving only a single $x$ derivative in \eqref{E:comm-104}.
In fact, returning to \eqref{E:comm-102}, we can use energy estimates
to similarly estimate each first order term on the right hand side by
$C_T \| u_0 \|^2_{H^{1/2}}$.  Here we emphasize that the constant
$C_T$ does depend on $T$.

Thus far we have proven the inequality
\begin{align}\label{LHS-POS-COM-1}
\int_0^T &\|e^{-sf/2}\la x \ra^{-1}\p_x u\|_{L^2}^2 + \la e^{-sf}\arctan(x)x^{2m-1}(1+x^{2m})^{-1/m-1}\p_\theta u,\p_\theta u\ra\,dt\\\notag
&\leq \int_0^T \left| \la sf_x\arctan(x)(\p_x^2+A^{-2}\p_\theta^2) u,u\ra
\right| \,dt  + C_T \|u_0 \|^2_{H^{1/2}}.
\end{align}

The first term on the left hand side of \eqref{LHS-POS-COM-1}  is
already written as a norm. For the second term, we need to do a bit of
work before it can be bounded below by a norm.  Note that 
\[
\la e^{-sf} |x|^{2m}\la x \ra^{-2m-3}\p_\theta u,\p_\theta u\ra \le  C\la e^{-sf}\arctan(x)x^{2m-1}(1+x^{2m})^{-1/m-1}\p_\theta u,\p_\theta u\ra,
\]
So we may bound the left hand side of (\ref{LHS-POS-COM-1}) below by 
\begin{equation}\label{LHS-POS-COM-1.5}
c\int_0^T \|e^{-sf} \la x \ra^{-1}\p_x u\|_{L^2}^2 + \|e^{-sf}|x|^{m}\la x\ra^{-m-3/2}\p_\theta u\|_{L^2}^2\,dt,
\end{equation}
for some $c > 0$.
Finally, we can drop the factors of $e^{-sf}$ by using the fact that
$f$ is compactly supported and hence $e^{-sf}$ is bounded below by
some $c > 0$. Thus the lower bound of the left hand side of
\eqref{LHS-POS-COM-1} is 
\begin{equation}\label{LHS-POS-COM-2}
c\int_0^T \|\la x \ra^{-1}\p_x u\|_{L^2}^2 + \||x|^{m}\la x\ra^{-m-3/2}\p_\theta u\|_{L^2}^2\,dt.
\end{equation}

So far we have shown
\begin{align}
& c\int_0^T \|\la x \ra^{-1}\p_x u\|_{L^2}^2 + \||x|^{m}\la
  x\ra^{-m-3/2}\p_\theta u\|_{L^2}^2\,dt \notag \\
  & \quad \leq C_T\| u_0 \|^2_{H^{1/2}} + \int_0^T \left| \la sf_x\arctan(x)(\p_x^2+A^{-2}\p_\theta^2) u,u\ra
  \right| \,dt. \label{E:comm-106}
  \end{align}
The strategy for dealing with the terms with two derivatives on the
right hand side of \eqref{E:comm-106} is to
make use of the fact that $s$ is small to absorb them into the left
hand side of \eqref{E:comm-106}.   
Integrating by parts and using energy estimates on the lower order terms, we have
\begin{align}
 \int_0^T & \left| \la sf_x\arctan(x)\p_x^2 u,u\ra
 \right| \,dt \notag \\
 & = 
\int_0^T \left|\la se^{-sf}\left(-s(f_x)^2\arctan(x) + f_{xx}
  \arctan(x) 
  \right)u,\p_x
  u\ra\right|\,dt \notag \\
& \quad + \int_0^T \left|\la se^{-sf}\left(
  f_x\la x \ra^{-2} + f_x\arctan(x)\p_x\right)u,\p_x
  u\ra\right|\,dt \notag \\
  & \leq C_T \| u_0 \|^2_{H^{1/2}} +  \int_0^T \left|\la se^{-sf}
  f_x\arctan(x)\p_x u,\p_x u\ra\ \right|dt. \label{E:comm-error-101}
  \end{align}
By making use of the fact that $f$ is compactly supported, we can then
bound \eqref{E:comm-error-101} above by
\[ C_T\|u_0\|_{H^{1/2}}^2 + Cs\int_0^T \|\la x \ra^{-1} \p_x u\|^2_{L^2}\]
The second of these terms may be moved to the left hand side of \eqref{E:comm-106}, provided that $s$ is sufficiently small.


Similarly, for the term with two $\theta$ derivatives on the right
hand side of \eqref{E:comm-106}, we compute
\begin{align*}
\int_0^T &\left|\la e^{-sf}sf_x\arctan(x)A^{-2}\p_\theta^2)u,u\ra\right|\,dt \\
&= s\int_0^T \left|\la \p_\theta(e^{-sf}f_x\arctan(x)A^{-2}u),\p_\theta u\ra\right|\,dt\\
&=s\int_0^T \left|\la e^{-sf}\arctan(x)A^{-2}(-sf_{\theta}f_x+f_{x\theta} +f_x\p_\theta)u,\p_\theta u\ra\right|\,dt\\
&\le C_T\|u_0\|_{H^{1/2}}^2 + \int_0^T s\left|\la e^{-sf}f_x\arctan(x)A^{-2}\p_\theta u,\p_\theta u\ra\right|\,dt.
\end{align*}
Recall we have assumed that 
\[|f_x| \le C|x|^{2m-1}
\]
in a neighborhood of $x=0$. Then using also the fact that $f$ is
compactly supported and $\arctan(0) = 0$, we have
\[
|sf_x\arctan(x)| \le Cs|x|^{2m}\la x \ra^{-2m-3},
\]
and thus
\[
\int_0^T s\left|\la e^{-sf}f_x\arctan(x)A^{-2}\p_\theta u,\p_\theta u\ra\right|\,dt \le Cs\int_0^T \||x|^{m}\la x \ra^{-m-3/2}\p_\theta u\|^2_{L^2}\,dt.
\]
By choosing $s$ sufficiently small we may absorb this into the left
hand side of \eqref{E:comm-106}.
We thus have the estimate
\begin{equation}\label{POS-COM}
\int_0^T \|\la x \ra^{-1}\p_x u\|_{L^2}^2 + \||x|^{m}\la x\ra^{-m-3/2}\p_\theta u\|_{L^2}^2\,dt \le  C_{T}\|u_0\|_{H^{1/2}}^2
\end{equation}

This estimate shows that the local smoothing is perfect away from  $x = 0$, and that we have perfect local smoothing in the $x$ direction. Next we will work on the local smoothing in the $\theta$ direction and near $x = 0$.

\section{{Estimating in the Frequency Domain}}
Our plan is to split the function $u$ up based on whether $|D_x|$ or $\la D_\theta\ra$ is larger, writing $u = u_1 + u_2$, so that $u_2$ satisfies the estimate
\[
\|\la D_\theta\ra u_2\|_{L^2} \lesssim \|\p_x u_2\|_{L^2}.
\]
We give an outline of the proof before proceeding with the proof. First we repeat the above argument using $u_2$ in place of $u$. Because $u_2$ is only approximately a solution to the Schr\"odinger equation, there will be additional error terms. The lower bound of
\[
\int_0^T \|\la x \ra^{-1}\p_x u_2\|_{L^2}^2\, dt.
\]
can be bounded from below by
\[
\int_0^T \|\la x \ra^{-1}\la D_\theta\ra u_2\|_{L^2}^2\,dt
\]
This gives us a lower bound in the $\theta$ direction away from $x =
0$. However, it is only for $u_2$, and the upper bound will involve a
term other than $\|u_0\|_{H^{1/2}}^2$, due to the fact that $u_2$ does
not solve the Schr\"odinger equation.
We will  reduce the problem to finding an appropriate estimate for $u_1$, which will be the subject of the remaining sections.

Let $\psi(\tau)$ be a bump function with $\psi(\tau) = 0$ for $|\tau| > 2$ and $\psi(\tau) = 1$ for $|\tau| < 1$. We define the operator $\psi(D_x/\la D_\theta\ra)$ as a Fourier multiplier. Let $\hat{u}(t,\xi,\eta)$ denote the Fourier transform of $u$ in $x$ and $\theta$. Because $\theta \in S^1$, $\eta$ takes integer values. Let $\mathcal{F}$ denote also this Fourier transform:
\[
(\mathcal{F}u)(\xi,\eta) = \int_{\R}\int_{S^1} e^{-ix\xi}e^{-i\theta \eta}u(x,\theta)\,d\theta dx.
\]
Let $\mathcal{F}^{-1}$ denote the inverse. Note that $\mathcal{F}^{-1}$ involves an integral in $\xi$ but a sum in $\eta$:
\[
(\mathcal{F}^{-1}v)(x,\theta) = \frac{1}{4\pi^2}\int_{\R}\sum_{\eta \in \Z} e^{ix\xi}e^{i\theta\eta}v(\xi,\eta)\,d\xi
\]
We then define
\[
\psi(D_x/\la D_\theta\ra)u = \mathcal{F}^{-1}(\psi(\xi/\la \eta\ra) \hat{u}).
\]
Again suppose $u$ solves
\[
\begin{cases}
  (D_t+Q)u = 0,\\
u(0,x,\theta) = u_0 \in \mathcal{S}.
\end{cases}
\]

We will consider $u_1 = \psi(D_x/\la D_\theta\ra)u$ and $u_2 = (1-\psi(D_x/\la D_\theta\ra))u$.
While $u_2$ is not a solution to the Schr\"odinger equation, we will show that it is close enough to a solution for our purposes. We have
\begin{align}
(D_t+Q)u_2 &= (D_t+Q)[(1-\psi(D_x/\la D_\theta\ra))u] \notag \\
&= (1-\psi(D_x/\la D_\theta\ra))(D_t+Q)u - [Q,\psi(D_x/\la
    D_\theta\ra)]u \notag \\
&=-[Q,\psi(D_x/\la D_\theta\ra)]u. \label{E:Q-comm-101}
\end{align}
We pause briefly to point out at this point that the commutator with
$\psi$ above gains both regularity {\it and} decay in the radial
variable $x$.  We use that shortly.

Letting $B = \arctan(x)\p_x$ as above we repeat the positive commutator argument from above. We begin by simply expanding the commutator to find
\[
\int_0^T \la [Q,B]u_2,u_2\ra\,dt = \int_0^T \la QBu_2,u_2\ra - \la BQu_2,u_2\ra\,dt.
\]
Next we want to have both $Q$'s be applied to $u_2$ so that we can use what we know about $u_2$ and the Schr\"odinger equation. 
We then proceed as in the calculations following (\ref{com-exp}) to find
\begin{align}
\int_0^T \la [Q,B]u_2,u_2\ra\,dt &=\int_0^T\biggr[\la Bu_2,(D_t+Q)u_2\ra
 -\la B(D_t+Q)u_2,u_2\ra\biggr]\,dt \notag \\
 &\quad +i\la Bu_2,u_2\ra\biggr|_{t=0}^T.  \label{E:comm-u2}
\end{align}
Our lower bound will come from the left hand side of the equality, while the right hand side will need to be bounded from above, in a manner similar to the preceding section.

Next we consider
\[
\int_0^T \la Bu_2,(D_t+Q)u_2\ra\,dt.
\]
We write this as
\begin{align*}
& \left|\int_0^T \la\la x \ra^{-1}Bu_2, \la x \ra(D_t+Q)u_2\ra\,dt
  \right| \\
  & \quad \le C\int_0^T(\|\la x \ra^{-1} Bu_2\|_{L^2}^2 + \|\la x \ra(D_t+Q)u_2\|_{L^2}^2)\,dt.
\end{align*}
Note that $[\psi(D_x/\la D_\theta\ra),\p_x]=0$.  Using the
inequality \eqref{POS-COM}  we proved in the previous section, we then know
\begin{align*}
  & \int_0^T\|\la x \ra^{-1}Bu_2\|^2\,dt \\
  & \quad \le C\int_0^T \|\la x\ra^{-1}\p_x u\|^2\,dt + C\|u\|_{L^2}^2 \le C_T\|u_0\|_{H^{1/2}}^2.
\end{align*}

Recall that $u_2$ satisfies  \eqref{E:Q-comm-101}, so that  
\begin{align}
  & (D_t+Q)u_2 \notag \\
  &= -[Q, \psi(D_x/\la D_\theta\ra)]u \notag \\
&= -[e^{-sf}(D_x^2+A^{-2}D_\theta^2) + R, \psi(D_x/\la D_\theta\ra)]u
  \notag \\
&=-[e^{-sf},\psi(D_x/\la D_\theta\ra)](D_x^2 + A^{-2}D_\theta^2)u -
  e^{-sf}[D_x^2 + A^{-2}D_\theta^2, \psi(D_x/\la D_\theta\ra)]u\notag \\
&\quad + [R, \psi(D_x/\la D_\theta\ra)]u. \label{E:Q-comm-102}
\end{align}
To estimate  the first term on the right hand side of
\eqref{E:Q-comm-102}, we make note of the commutator terms.   Because $f$ has compact support in $x$, we have decay
in $x$ as quickly as we like. Because of the $\psi'$ appearing in the
commutator, we will be working in the region where $D_x \sim \la
D_\theta\ra$, and we will gain a power of $D_x$ or $\la D_\theta \ra$,
whichever is more useful. Thus
\[
\|\la x \ra[e^{-sf},\psi(D_x/\la D_\theta\ra)](D_x^2+A^{-2}D_{\theta}^2)u\| \le C\|\la x \ra^{-1} D_xu\|.
\]
We may then bound $\int_0^T\|\la x \ra^{-1} D_xu\|^2\,dt$ by $C_T\|u_0\|_{H^{1/2}}^2$ as we did before.

Next we note that
\[
[D_x^2 + A^{-2}D_\theta^2, \psi(D_x/\la D_\theta\ra)] = [A^{-2}(x), \psi(D_x/\la D_\theta\ra)]D_\theta^2.
\]
We then have
\[
\la x \ra[A^{-2}(x),\psi(D_x/\la D_\theta\ra)]D_\theta^2 = L\la x\ra^{-2}D_\theta \tilde{\psi}(D_x/\la D_\theta\ra),
\]
where $L$ is $L^2$-bounded and $\tilde{\psi} \in C_0^\infty$ equals $1$ on $\supp \psi$. Then
\begin{align*}
\int_0^T \|\la x \ra[A^{-2}(x),\psi(D_x/\la D_\theta\ra)]D_\theta^2u\|^2\,dt &= \int_0^T \|L\la x\ra^{-2}D_\theta \tilde{\psi}(D_x/\la D_\theta\ra)u\|^2\,dt\\
&\le C\int_0^T\|\la x \ra^{-2}D_\theta \tilde{\psi}(D_x/\la D_\theta\ra)u\|^2\,dt.
\end{align*}
Controlling this will be the subject of the next section.

The term 
\[
\int_0^T \la B(D_t+Q)u_2,u_2\ra\,dt = \int_0^T \la (D_t+Q)u_2, B^*u_2\ra\,dt
\]
 from \eqref{E:comm-u2} is controlled in exactly the same fashion.
Thus far we have shown
\begin{align*}
\left| \int_0^T \la[Q,B]u_2,u_2\ra\,dt \right| \le CT\|u_0\|_{H^{1/2}}^2 + \int_0^T\|\la x \ra^{-2}D_\theta \tilde{\psi}(D_x/\la D_\theta\ra)u\|^2\,dt.
\end{align*}

Next we use our expansion of $[Q,B]$ given in (\ref{com-exp}) above:
\begin{align*}
[Q,B] &= -e^{-sf}\biggr[2\la x \ra^{-2}\p_x^2 - \frac{2x}{(1+x^2)^2}\p_x - sf_x\arctan(x)(\p_x^2 + A^{-2}(x)\p_\theta^2)\\
&\quad + \arctan(x)2A'A^{-3}\p_\theta^2\biggr] + [R, B].
\end{align*}
We will examine
\begin{align}
\int_0^T & \la[Q,B]u_2, u_2 \ra dt \notag \\
& = \int_0^T \la -e^{-sf}(2\la x \ra^{-2}\p_x^2u_2, u_2 \ra dt \notag
\\
& \quad
+ \int_0^T \la e^{-sf} \frac{2x}{(1+x^2)^2}\p_xu_2, u_2 \ra dt \notag
\\
& \quad +\int_0^T \la e^{-sf} sf_x\arctan(x)(\p_x^2 +
A^{-2}(x)\p_\theta^2)u_2, u_2 \ra dt \notag \\
& \quad 
-\int_0^T \la e^{-sf} \arctan(x)2A'A^{-3}\p_\theta^2u_2, u_2 \ra dt
\notag \\
& \quad +\int_0^T \la [R, B])u_2, u_2 \ra dt \label{E:Q-comm-201}
\end{align}
term by term.  
As in the previous section we have
\[
\int_0^T |\la [R,B]u_2,u_2\ra| \,dt \le CT\|u_0\|_{H^{1/2}}^2, 
\]
so we move on to the next computation.  
Next we write
\begin{align*}
  \int_0^T &  \la -2e^{-sf}\la x \ra^{-2}\p_x^2 u_2,u_2\ra\,dt \\
  &= 2\int_0^T \la e^{-sf/2}\la x \ra^{-1}\p_xu_2,\p_x(e^{-sf/2}\la x \ra^{-1}u_2)\ra\,dt\\
&= 2\int_0^T \|e^{-sf/2}\la x \ra^{-1} \p_xu_2\|^2\,dt\\
&\quad + \int_0^T \la e^{-sf/2}\la x \ra^{-1}\p_xu_2, \p_x(e^{-sf/2}\la x \ra^{-1}) u_2\ra\,dt.
\end{align*}
Note that
\begin{align*}
&\int_0^T \left|\la e^{-sf/2}\la x \ra^{-1}\p_x u_2, \p_x(e^{-sf/2}\la x \ra^{-1})u_2\ra\right|\,dt\\
&\quad\le C\int_0^T \|\la D_x\ra^{-1/2} (e^{-sf/2}\la x \ra^{-1} \p_x u_2)\|\|\la D_x\ra^{1/2}[(\p_xe^{-sf/2}\la x \ra^{-1}] u_2\|\,dt\\
&\quad\le C\int_0^T \|u\|_{H^{1/2}}^2\,dt\\
&\quad\le CT\|u_0\|_{H^{1/2}}^2.
\end{align*}

The $\p_\theta^2$ term in \eqref{E:Q-comm-201} is taken care of similarly:
\begin{align}
\int_0^T &-\la e^{-sf}\arctan(x)2A'A^{-3}\p_\theta^2u_2,u_2\ra
\,dt\notag \\
&=\int_0^T \la \arctan(x)2A'A^{-3}\p_\theta
u_2,\p_\theta(e^{-sf}u_2)\ra \,dt \notag \\
&=\int_0^T \la e^{-sf}\arctan(x)2A'A^{-3}\p_\theta u_2,(-sf_\theta +
\p_\theta)u_2)\ra \,dt. \label{E:theta-2}
\end{align}
Then for the term from \eqref{E:theta-2} involving only one derivative we may again bound it from above by $C_T\|u_0\|_{H^{1/2}}^2$. The other term is
\[
\int_0^T\la e^{-sf}\arctan(x)2A'A^{-3}\p_\theta u_2,\p_\theta u_2\ra\,dt \ge c\int_0^T \||x|^{m}\la x \ra^{-m-3/2}\p_\theta u_2\|^2\,dt.
\]
The next term from \eqref{E:Q-comm-201} we estimate is
\[
\left| \int_0^T \la 2x\la x \ra^{-4}\p_xu_2,u_2\ra\,dt \right| \le C_T\|u_0\|_{H^{1/2}}^2,
\]
just as in the previous sections.

The remaining terms can be controlled by using our estimates  from the
previous section and again that $s$ is small and $| f_\theta| =
\O(|x|^{2m-1})$.
Collecting terms, we end up with $u_2$ satisfying the estimate 
\begin{align}
\int_0^T &\|\la x \ra^{-1}\p_x u_2\|^2_{L^2} + \||x|^m\la
x\ra^{m-3/2}\p_\theta u_2\|^2_{L^2}\,dt \notag \\
&\le C_T\|u_0\|_{H^{1/2}}^2 + C\int_0^T\|\la x \ra^{-2}D_\theta
\tilde{\psi}(D_x/\la D_\theta\ra)u\|_{L^2}^2\,dt. \label{E:u2-101}
\end{align}

Finally we make use of the micro-support property of $\psi(D_x/\la
D_\theta\ra)u$. This function cuts $u_2 = (1 - \psi(D_x/\la
D_\theta\ra)u$ off to where $\la D_\theta \ra \lesssim |\p_x|$, so we
have from the G{\aa}rding inequality
\[
\|\la x \ra^{-1} \la D_\theta\ra u_2\| \le C\|\la x \ra^{-1} \p_x
u_2\| + C \| u \|^2_{H^{1/2}}.
\]
Using this, we see that
\[
\int_0^T \|\la x \ra^{-1} \la D_\theta\ra u_2\|^2_{L^2}\,dt \le C_T\|u_0\|_{H^{1/2}}^2 + \int_0^T\|\la x \ra^{-2}D_\theta \tilde{\psi}(D_x/\la D_\theta\ra)u\|_{L^2}^2\,dt.
\]

Now let $\chi(x) \equiv 1$ near 0 and have compact support. Then
\begin{align*}
\int_0^T&\|\la x \ra^{-2}D_\theta \tilde{\psi}(D_x/\la D_\theta\ra)u\|_{L^2}^2\,dt \\
&\le C\int_0^T\|\la x \ra^{-2}D_\theta \tilde{\psi}(D_x/\la
D_\theta\ra)\chi(x)u\|_{L^2}^2
\,dt
\\
&\le C\int_0^T 
\|\la x \ra^{-2}D_\theta \tilde{\psi}(D_x/\la D_\theta\ra)(1-\chi(x))u\|_{L^2}^2\,dt 
\end{align*}
The second  term on the right hand side has integrand with support
away from $x = 0$, so can then be bounded using the estimate from the previous section:
\begin{align*}
\int_0^T \|\la x \ra^{-2}D_\theta \tilde{\psi}(D_x/\la D_\theta\ra)(1-\chi(x))u\|^2\,dt &\le C\int_0^T  \la x \ra^{-2}(1-\chi(x))D_\theta u\|^2\,dt\\
&\le \int_0^T \||x|^m\la x \ra^{m-3/2}\p_\theta u\|^2\,dt\\
&\le C_T\|u_0\|_{H^{1/2}}^2.
\end{align*}

Thus to finish this part of our estimate we need only bound
\[
\int_0^T\|\la x \ra^{-2}D_\theta \tilde{\psi}(D_x/\la D_\theta\ra)\chi(x)u\|_{L^2}^2\,dt,
\]
for $\tilde{\psi}$ with compact support, $\tilde{\psi} \equiv 1$ on the support of $\psi$.
Note that by estimating this with $\tilde{\psi}$, we will also bound
\[
\int_0^T \|\la x \ra^{-2} D_\theta \chi(x)u_1\|_{L^2}^2\,dt = \int_0^T
\|\la x \ra^{-2} D_\theta \chi(x)\psi(D_x / \la D_\theta \ra) u\|_{L^2}^2\,dt,
\]
which will then complete the local smoothing estimate. We begin this process in the next section.

\section{{High Frequency Estimate}}
We have proven our local smoothing estimate outside of a region that
is ``small'' in both space and frequency. This suggests that it will
be profitable to work microlocally. To that end, we wish to show that
estimating 
\[
\int_0^T \|\la x \ra^{-2}D_\theta \tilde{\psi}(D_x/\la D_\theta\ra) \chi(x)u\|_{L^2}^2\,dt
\]
follows from  proving an estimate of the form
\[
\|(Q+\tau)\psi\chi u\| \ge \|\la D_\theta\ra^{{r}}\psi\chi u\|
\]
for some $r \in \R$, where 
 $u$ is microlocalized near $(x,\xi/\la \eta\ra) = 0$. We do this by using a ``$TT^*$'' argument.

The operator to which we apply the argument will be $F(t)$. Define the operator $F(t)$ by
\[
F(t)g = \chi(x)\psi(D_x/\la D_\theta\ra) e^{-itQ}g(x,\theta).
\]
We need to determine for which values of $r$, with $0 \le r \le 1$,  we have a bounded map $F: L^2_{x}L^2_{\theta} \to L^2([0,T])L^2_{x}H^r_{\theta}$. 

We have
\[
F^*g = \int_0^T e^{i\tilde{t}Q}\psi(D_x/\la D_\theta\ra)\chi(x) \tilde{g}\,d\tilde{t}
\]
and we need to show
\[
F^*: L^2([0,T])L^2_{x}H^{-r}_{\theta} \to L^2_xL^2_{\theta}.
\]
Then
\[
FF^*\tilde{g} =  \chi(x)\psi(D_x/\la D_\theta\ra)\int_0^T e^{i(\tilde{t}-t)Q}\psi(D_x/\la D_\theta\ra)\chi(x) \tilde{g}\,d\tilde{t}
\]
and we need to show
\[
FF^*: L^2([0,T])L^2_{x}H^{-r}_{\theta} \to L^2([0,T])L^2_{x}H^{r}_{\theta}.
\]

We split this expression into two. Let
\[
v_1 =\int_0^t e^{i(\tilde{t}-t)Q}\psi(D_x/\la D_\theta\ra)\chi(x) \tilde{g}\,d\tilde{t}
\]
and
\[
v_2 = \int_t^T e^{i(\tilde{t}-t)Q}\psi(D_x/\la D_\theta\ra)\chi(x) \tilde{g}\,d\tilde{t}.
\]
Then
\[
FF^*\tilde{g} = \chi(x)\psi(D_x/\la D_\theta\ra)(v_1+v_2).
\]

We need to show
\[
\|\chi(x)\psi(D_x/\la D_\theta\ra)v_j\|_{L^2_tL^2_xH^{r}_{\theta}} \le C\|\tilde{g}\|_{L^2_tL^2_xH^{-r}_{\theta}}
\]
for $j = 1,2$, where we require some assumptions on $\tilde{g}$ which will be included in the statement of our theorem below.

Note that
\[
(D_t+Q)v_1 = -i\psi(D_x/\la D_\theta\ra)\chi(x)\tilde{g}
\]
and
\[
(D_t+Q)v_2 = i\psi(D_x/\la D_\theta\ra)\chi(x)\tilde{g}.
\]
Let $\hat{\cdot}$ denote the Fourier transform in time. Then
\[
(\tau+Q)\hat{v}_j = (-1)^{j}i\chi\psi\hat{\tilde{g}},
\]
If we can prove the bound
\[
\|\chi\psi \hat{v}_j\|_{L^2_\tau L^2_xH^{r}_{\theta}} \le C\|\tilde{g}\|_{L^2_\tau L^2_xH^{-r}_{\theta}},
\]
we will have shown that $FF^*: L^2_tL^2_xH_{\theta}^{-r} \to L^2_tL^2_xH_{\theta}^r$ (and thus $F: L^2_xL^2_\theta \to L^2_tL^2_xH^r_{\theta}$) is a bounded operator. To that end, we need to bound the operator
\[
\chi(x)\psi(D_x/\la D_\theta\ra)(Q+\tau)^{-1}\psi(D_x/\la D_\theta\ra)\chi(x)
\]
in the $L^2_xH^r_\theta \to L^2_xH^{-r}_\theta$ operator norm, uniformly in $\tau$.

This is equivalent to showing that there exists $C$ such that
\[
\|\la D_{\theta}\ra^{2r}u\|_{L^2_{x,\theta}} \le C\|(Q+\tau)u\|_{L^2_{x,\theta}}.
\]
Proving this estimate will be the subject of the next section. Proving it will bound $\chi(x)\psi(D_x/\la D_\theta\ra)e^{itQ}g$ in $L^2_tL^2_xH^r_{\theta}$, but we are ultimately interested in bounding it in $L^2_tL^2_xH^1_{\theta}$. To do so, we apply the bound to $\la D_\theta\ra^{1-r}g$, so we will ultimately end up with the bound
\[
\int_0^T \|\la D_\theta\ra\chi(x)\psi(D_x/\la D_\theta\ra)u\|^2_{L^2_{x,\theta}}\,dt \le C\|u_0\|^2_{H^{1-r}}.
\]

\section{{Microlocal  Resolvent Estimate}}
We state and prove the aforementioned resolvent estimate in order to finish the proof of the main theorem.

    \begin{theorem}\label{Main-theorem}
    Let $\epsilon > 0$ and let $p = \xi^2 + \eta^2 A^{-2}$. Suppose $f(x,\theta)$ is a compactly supported, smooth function such that
    \[
    |\p_x^j \p_\theta^k f| \le C|x|^{2m-1}
    \]
    for $x$ small and $j,k \le N$ for sufficiently large $N =
    N(m,\epsilon)$ and $j+k \ge 1$.  For $s$  sufficiently small, there exists $c > 0$ such that
    \[
    \|((e^{-sf}p)^w+\tau)u\|_{L^2_{x,\theta}} \ge c\|\la D_\theta\ra^{2/(m+1) - \epsilon}u\|_{L^2_{x,\theta}},
    \]
    for all $\tau$, provided that $u$ satisfies the following microlocal support properties: We require that $u$ be of the form
    \[
    u = b^w\tilde{u},
    \]
    where $b$ has (classical) symbol supported in the region where $|(x,\xi/\eta)| \le \delta/2$ for some  small $\delta$ and $|\eta| \ge M$ for  sufficiently large $M$.
    \end{theorem}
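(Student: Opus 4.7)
\medskip

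\noindent\textbf{Proof plan.} The plan is to reformulate the inequality as a semiclassical resolvent estimate at a degenerate trapped orbit and run a positive commutator argument, adapting the scheme from \cite{ChWu-lsm} to absorb the conformal perturbation $e^{-sf}$.

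\emph{Semiclassical reduction.} On the microlocal support of $u$ we have $|\eta| \ge M$ and $|\xi/\eta| \le \delta/2$, so $h := \la\eta\ra^{-1}$ is a legitimate small parameter and $h\xi$ is bounded. Multiplying through by $h^2$ and setting $z = -h^2\tau$, the desired inequality becomes
\[
\|(P_h - z)u\|_{L^2} \ge c\, h^{2m/(m+1) + \epsilon}\|u\|_{L^2},
\]
where $P_h := h^2(e^{-sf}p)^w$ has semiclassical principal symbol
$p_h(x,\theta,\xi,\eta) = e^{-sf(x,\theta)}\bigl(h^2\xi^2 + (h\eta)^2 A^{-2}(x)\bigr)$.
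When $z$ is bounded away from the set $\{e^{-sf(0,\theta)}(h\eta)^2 : \theta \in S^1\}$, the symbol $p_h - z$ is elliptic on the support of $b$ and a standard parametrix gives $\|(P_h - z)u\| \ge c\|u\|$, which is much stronger than needed. So from now on we may assume $z$ lies in a fixed neighborhood of this trapping energy.

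\emph{Positive commutator in the trapping region.} Using $A^{-2}(x) = 1 - x^{2m}/m + O(x^{4m})$ and $f(x,\theta) = f(0,\theta) + O(|x|^{2m})$ (the latter from the hypothesis on derivatives of $f$), the symbol $p_h$ near the trapped orbit agrees to leading order with the model $e^{-sf(0,\theta)}\bigl(h^2\xi^2 - (h\eta)^2 x^{2m}/m\bigr) + e^{-sf(0,\theta)}(h\eta)^2$. The second microlocal rescaling $x = h^{1/(m+1)} X$, $h\xi = h^{m/(m+1)}\Xi$ converts this into a semiclassical problem with effective parameter $h' = h^{m/(m+1)}$ and model operator $\Xi^2 - X^{2m}/m$ at energy $0$. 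Following \cite{ChWu-lsm}, choose an escape function $g(X,\Xi)$ for the Hamilton flow of the model whose Poisson bracket satisfies $\{\Xi^2 - X^{2m}/m, g\} \ge c(\Xi^2 + X^{2m})$ on the relevant region, and quantize back to $(x,\xi)$ to obtain a commutant $G$. The principal symbol of $\frac{1}{i}[P_h, G]$ is then bounded below by $c\bigl(h^2\xi^2 + (h\eta)^2 x^{2m}\bigr)$, which after G{\aa}rding and the rescaling produces a term of size $c\,h^{2m/(m+1)}\|u\|^2$. Pairing with $\mathrm{Im}\la Gu, (P_h - z)u\ra$ and absorbing $O(h^{2m/(m+1)+\delta})$ remainders from the pseudodifferential calculus into the slack provided by $h^\epsilon$ gives the bound.

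\emph{Absorbing the conformal perturbation.} Writing
\[
\{e^{-sf}p, g\} = e^{-sf}\{p,g\} + p\,\{e^{-sf},g\},
\]
the first term is the unperturbed positive commutator handled above. For the second, the hypothesis $|\p_x^j\p_\theta^k f| \le C|x|^{2m-1}$ for $j+k\ge 1$ gives $|\{e^{-sf}, g\}| \le Cs|x|^{2m-1}(|g_\xi| + |g_\eta|) = O(s|x|^{2m-1})$ on the support of $g$. Since $p = O((h\eta)^2) = O(1)$, the perturbative contribution to the commutator symbol is $O(s|x|^{2m-1})$, one power of $|x|$ short of the main positive term $c\,x^{2m}(h\eta)^2$. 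Applying Cauchy--Schwarz to split $s|x|^{2m-1} \le (c/2)|x|^{2m} + Cs^2|x|^{2m-2}$ (or, equivalently, absorbing a small multiple of the main term) and then using the smallness of $s$ to swallow the remainder into the left--hand side of the target inequality closes the argument. The principal technical obstacle is that $\partial_\theta f \ne 0$, so the perturbation couples different Fourier modes in $\eta$ and destroys the clean one-dimensional reduction used in \cite{ChWu-lsm}; one must instead carry out the full two-dimensional symbolic calculus on $\R_x\times S^1_\theta$ using the discrete--$\eta$ classes $S^m_\rho$ from \cite{PDO-on-torus}, and verify that all error terms lie in a class with gain $\rho > 0$ --- the need for $\rho > 0$ (rather than the marginal $\rho = 0$ of \cite{ChWu-lsm}) is precisely what forces the loss of $\epsilon$ in the final exponent.
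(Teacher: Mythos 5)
Your overall strategy (escape function, positive commutator, absorbing the perturbation) is the right one and parallels the paper's argument, but the perturbation‑absorption step has a genuine gap. You bound the perturbative contribution to the commutator symbol by
\[
|\{e^{-sf},g\}|\le Cs|x|^{2m-1}(|g_\xi|+|g_\eta|)=O(s|x|^{2m-1}),
\]
then try to absorb it via the splitting $s|x|^{2m-1}\le(c/2)|x|^{2m}+Cs^2|x|^{2m-2}$. This does not close: the residual $Cs^2|x|^{2m-2}$ is \emph{larger} than the positive term $c\,x^{2m}(h\eta)^2$ when $|x|$ is small (exactly the region of interest, near the trapped orbit), so it cannot be absorbed no matter how small $s$ is. Cauchy--Schwarz moves the ``missing'' power of $x$ in the wrong direction here.

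The correct mechanism --- and the one the paper uses --- is that any reasonable escape function for this hyperbolic-type trapping \emph{vanishes at the trapped orbit in $\xi$}: in the paper's notation $a=\chi(x)\chi(\xi\eta^{-1})\Lambda(x)\Lambda(\xi|\eta|^{-\epsilon})\tilde\chi(\eta)$ with $\Lambda(t)\sim t$ near $0$, so $a_\xi\sim \Lambda(x)\,|\eta|^{-\epsilon}\sim|x|\,|\eta|^{-\epsilon}$ near $x=0$. It is precisely this $\Lambda(x)\sim|x|$ factor in $g_\xi$ (and similarly in the difference operator in $\eta$) that upgrades your estimate from $O(s|x|^{2m-1})$ to $O(s|x|^{2m}|\eta|^{2-\epsilon})$, i.e.\ to a small multiple of the main positive term $g_2\sim|\eta|^{2-\epsilon}x^{2m}$. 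Without that factor your absorption fails. To repair the argument, write $g_\xi$ explicitly (it inherits a factor vanishing like $|x|$ from the escape function), and note that the hypothesis $|f_x|,|f_\theta|\le C|x|^{2m-1}$ combined with this factor gives $p\,|\{e^{-sf},g\}|\le C|s|\,g_2$, after which smallness of $s$ absorbs it cleanly. You should also note that you have not addressed the symbolic remainders $R_1$ (the terms beyond $H_{e^{-sf}p}a$ in the commutator expansion) or the passage from the symbolic lower bound to the stated operator inequality, both of which the paper handles via the hybrid calculus $S^m_\rho$ with $\rho=\epsilon>0$ and the anharmonic oscillator lower bound (their Lemma~6.4); the $\epsilon$-loss in the exponent enters there, not merely from ``slack'' in a single G{\aa}rding step.
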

Broadly speaking, our proof uses a commutator argument. The basic structure is to make use of the fact that to highest order, the symbol of the commutator of two pseudodifferential operators is given by applying the Hamiltonian vector field of the one symbol to the other symbol. Recall that $Q = (e^{-sf}p)^w$ denotes the operator we are interested in. We define a symbol $a\in S^0_{\epsilon}$ such that $H_{e^{-sf}p} a$ has the required lower bound. Ignoring the issues of error terms coming from the pseudodifferential calculus for the moment, we will consider the quantity
\[
\la [Q + \tau, a^w]u,u\ra,
\]
where $a$ is yet to be determined, following \cite{ChWu-lsm}.


As we stated above, the lower bound makes use of the fact that to
highest order, $[Q, a^w] = i^{-1}(H_{e^{-sf}p}a)^w$, where $p =
\xi^2+A^{-2} \eta^2$. We seek $a$ such that the resulting symbol $H_{e^{-sf}q}a$ is of the form $|\eta|^{-\epsilon}(\xi^2+\eta^2x^{2m})$ at least where $x$ and $\xi\eta^{-1}$ are small and $\eta$ is large. We may then use a lower bound on this operator to achieve a lower bound of
\[
\|\la D_\theta\ra^{1/(m+1) - \epsilon/2}u\|^2 \le \left|\la a^wu,(Q + \tau)u\ra\right|.
\]


The operator we require a lower bound on is $(e^{-sf}q)^w+\tau$. To define the symbol of our commutant $a$, we first define
\[
\Lambda(t) = \int_0^t \la \tilde{t} \ra^{-1-\epsilon_0}\,d\tilde{t},
\]
where $\epsilon_0 > 0$ is a small fixed number so that the integral is
bounded.
The important facts about $\Lambda(t)$ is that it is a symbol of order 0, and $\Lambda(t) \sim t$ near 0.

We will also make use of cutoff functions $\chi(t)$ and
$\tilde\chi(t)$.  For $\delta>0$ small, let $\chi(t)$ be a smooth, compactly supported function such that $\chi(t) \equiv 1$ for $|t| \le \delta/2$ and $\chi(t) \equiv 0$ for $|t| \ge \delta$. Let $\tilde\chi(t)$ be a smooth function such that $\tilde\chi(t)\equiv 0$ for $|t| \le M$ and $\tilde\chi(t) \equiv 1$ for $|t| \ge 2M$.

Let
\[
a = \chi(x)\chi(\xi\eta^{-1})\Lambda(x)\Lambda(\xi|\eta|^{-\epsilon})\tilde\chi(\eta)
\]
and note that
\[
|\p_x^\alpha \p_\xi^\beta \p_\theta^\gamma\p_\eta^\delta a| \le C_{\alpha,\beta,\gamma,\delta}\la \xi\ra^{-\epsilon\beta} \la\eta\ra^{-\epsilon\delta},
\]
where we have used the fact that $\chi(\xi\eta^{-1})$ cuts off to where $|\xi| \le |\eta|$. Because of this inequality, $a \in S^0_{\epsilon}$.

\begin{theorem}\label{hamiltonian_theorem}
Let $p, a$ be as above. Then for any $\epsilon > 0$
there exists $c > 0$ such that
\[
\la e^{-sf}(H_{e^{-sf}p}a)^wu,u\ra \ge c\la (D_\theta)^{-\epsilon}(D_x^2 + D_\theta^2x^{2m})u,u\ra - \cO(\|\la D_\theta\ra^{-\epsilon/2})u\|^2).
\]
for all $u$ microsupported where $|x| \le \delta$, $|\xi\eta^{-1}| \le \delta$, and $|\eta| \ge M$ for some large $M$.
\end{theorem}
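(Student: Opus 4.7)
The plan is to reduce the claimed operator lower bound to a pointwise symbol-level computation of $H_{e^{-sf}p}a$ on the microsupport of $u$, and then invoke sharp G\aa rding in the calculus $S^*_\epsilon$ to convert the symbol inequality into the operator inequality. I would first apply the Leibniz rule for Hamiltonian vector fields to split
\[
H_{e^{-sf}p}a = e^{-sf}H_p a + s e^{-sf} p (f_x\partial_\xi a + f_\theta\partial_\eta a),
\]
using that $a$ is independent of $\theta$ and $f$ is independent of $\xi,\eta$, so that $H_{e^{-sf}}a = s e^{-sf}(f_x\partial_\xi a + f_\theta\partial_\eta a)$. The first summand is the unperturbed main term; the second is the perturbation.

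Next I would compute the main term explicitly. On the interior of the microsupport of $u$ every cutoff in $a$ equals $1$ and, using $H_p = 2\xi\partial_x + 2A^{-3}A'\eta^2 \partial_\xi$ (the $\partial_\theta$ term vanishes on $a$), one finds
\[
H_p a = 2\xi\la x\ra^{-1-\epsilon_0}\Lambda(\xi|\eta|^{-\epsilon}) + 2A^{-3}A'\eta^2\Lambda(x)|\eta|^{-\epsilon}\la\xi|\eta|^{-\epsilon}\ra^{-1-\epsilon_0}.
\]
Both summands are pointwise nonnegative: $\xi$ and $\Lambda(\xi|\eta|^{-\epsilon})$ share a sign, and $A^{-3}A'(x)$ and $\Lambda(x)$ both carry the sign of $x$. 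Using $A^{-3}A'(x)\sim x^{2m-1}$ and $\Lambda(x)\sim x$ for $|x|$ small, together with the elementary estimate $t\Lambda(t)\ge c t^2\la t\ra^{-1-\epsilon_0}$, this symbol dominates $c|\eta|^{-\epsilon}(\xi^2 + x^{2m}\eta^2)\la\xi|\eta|^{-\epsilon}\ra^{-1-\epsilon_0}$ on the microsupport.

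For the perturbation contribution, the assumed $|f_x|,|f_\theta|\le C|x|^{2m-1}$ together with the pointwise bounds $|\partial_\xi a|\lesssim |x||\eta|^{-\epsilon}$ and $|\partial_\eta a|\lesssim |x|\,|\xi||\eta|^{-1-\epsilon}$, and the estimate $p\lesssim\xi^2+\eta^2$, give
\[
|sp(f_x\partial_\xi a + f_\theta\partial_\eta a)|\lesssim s|\eta|^{-\epsilon}(\xi^2 + x^{2m}\eta^2),
\]
so that for $s$ small enough this is absorbed into the main positive symbol. The commutant-boundary terms, in which a derivative of $H_p$ lands on one of $\chi(x),\chi(\xi\eta^{-1}),\tilde\chi(\eta)$, are microsupported outside the microsupport of $u$, and so after pairing with $u = b^w\tilde u$ produce residual operators absorbed into the tolerated error $\cO(\|\la D_\theta\ra^{-\epsilon/2}u\|^2)$. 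The outer factor $e^{-sf}$ is a real, bounded, positive symbol, so commuting it through the Weyl quantization produces only a remainder of order $\epsilon$ better than the main term in the symbol class $S^*_\epsilon$, again absorbed in the admissible error.

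The main obstacle I anticipate is the final conversion of the pointwise symbol inequality to the stated operator inequality via sharp G\aa rding in $S^*_\epsilon$, especially the careful treatment of the inverse-bracket factor $\la\xi|\eta|^{-\epsilon}\ra^{-1-\epsilon_0}$, which can be small when $|\xi||\eta|^{-\epsilon}$ is large. I would exploit the microsupport constraint $|\xi/\eta|\le \delta/2$ to bound its contribution at the operator level and thereby land in the admissible remainder $\cO(\|\la D_\theta\ra^{-\epsilon/2}u\|^2)$. This is also precisely the step where the hypothesis $|\partial_x^j\partial_\theta^k f|\le C|x|^{2m-1}$ is crucial, as it alone prevents the perturbation term $sp(f_x\partial_\xi a + f_\theta\partial_\eta a)$ from dominating the main positive contribution near the degenerate trapped orbit at $x=0$.
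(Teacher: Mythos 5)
Your decomposition $H_{e^{-sf}p}a = e^{-sf}H_pa + se^{-sf}p\left(f_x\,\p_\xi a + f_\theta\,\p_\eta a\right)$ reproduces the paper's split of the symbol into $g$ and $\tilde g$, your computation of $H_pa$ is correct, and the pointwise bound on the perturbation term using $|f_x|,|f_\theta|\lesssim|x|^{2m-1}$ matches the paper's estimate $|\tilde g|\le C|s|g_2$. The genuine gap is in the step where you claim the single uniform minorant
\[
H_pa\ge c\,|\eta|^{-\epsilon}\bigl(\xi^2+x^{2m}\eta^2\bigr)\la\xi|\eta|^{-\epsilon}\ra^{-1-\epsilon_0}.
\]
This is pointwise true, but the extra factor $\la\xi|\eta|^{-\epsilon}\ra^{-1-\epsilon_0}$ is genuinely lossy on the portion of the microsupport where $|\xi|$ is comparable to $\delta|\eta|$: there $\xi|\eta|^{-\epsilon}\sim\delta|\eta|^{1-\epsilon}$ is large, the bracket factor is of size $\bigl(\delta|\eta|^{1-\epsilon}\bigr)^{-1-\epsilon_0}$, and the discrepancy between your minorant and the target symbol $|\eta|^{-\epsilon}(\xi^2+\eta^2x^{2m})$ is of the full order $|\eta|^{2-\epsilon}$, not order $\la\eta\ra^{-\epsilon}$. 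You flag this difficulty and propose to exploit the microsupport constraint $|\xi/\eta|\le\delta/2$, but that constraint controls $\xi\eta^{-1}$, not $\xi|\eta|^{-\epsilon}$, which is unbounded on the microsupport as $|\eta|\to\infty$; so the loss cannot be absorbed into the admissible remainder $\cO(\|\la D_\theta\ra^{-\epsilon/2}u\|^2)$, and the argument as written does not close.

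The paper treats the size of $\xi|\eta|^{-\epsilon}$ as an additional dichotomy within the microsupport. When $|\xi|\eta|^{-\epsilon}|\le\delta$, Taylor-expanding $\Lambda$, $\la\cdot\ra^{-1-\epsilon_0}$, and $A^{-3}A'$ yields $g_1+g_2=2e^{-sf}|\eta|^{-\epsilon}(\xi^2+\eta^2x^{2m})(1+\cO(\delta^2))$ with no residual bracket factor. When $|\xi|\eta|^{-\epsilon}|\ge\delta$, it uses that $\xi$ and $\Lambda(\xi|\eta|^{-\epsilon})$ share a sign and $|\Lambda|$ is bounded away from zero there, giving the direct ellipticity bound $g\ge c|\eta|^\epsilon$ on that sub-cone. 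Only after this two-case analysis does the paper factor $g+\tilde g=|\eta|^{-\epsilon}(\xi^2+\eta^2x^{2m})K^2$ for a strictly positive symbol $K$ and quantize. The elementary estimate $t\Lambda(t)\ge ct^2\la t\ra^{-1-\epsilon_0}$ that you use discards exactly the large-$t$ ellipticity of $\Lambda$ on which the second case rests, which is why your one-shot pointwise bound is insufficient; you need to reinstate the case split before invoking G{\aa}rding.
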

\begin{proof}
We begin by computing $H_{e^{-sf}p}a$. First recall that
\[
p(x,\xi,\theta,\eta) = \xi^2 + A^{-2}(x)\eta^2
\]
and
\[
a(x,\xi,\theta,\eta) = \chi(x)\chi(\xi\eta^{-1})\Lambda(x)\Lambda(\xi|\eta|^{-\epsilon})\tilde\chi(\eta).
\]
Note that $a$ does not depend on $\theta$, so no $(e^{-sf}p)_\eta
a_\theta$ term will appear in the symbol expansion for the
commutator. We next compute the necessary derivatives for the symbol
expansion of the commutator.  Recall that the notation for the $\eta$ derivative actually refers to a difference operator in $\eta$.
\begin{align*}
(e^{-sf}p)_x &= -2e^{-sf}A^{-3}A'\eta^2 - sf_xe^{-sf}p,\\
(e^{-sf}p)_\xi &= 2e^{-sf}\xi,\\
(e^{-sf}p)_{\theta} &= -sf_\theta e^{-sf}p\\
	a_x &= [\chi'(x)\chi(\xi\eta^{-1})\Lambda(x)\Lambda(\xi|\eta|^{-\epsilon})+ \chi(x)\chi(\xi\eta^{-1})\Lambda'(x)\Lambda(\xi|\eta|^{-\epsilon})]\tilde\chi(\eta),\\
	a_\xi &= [\eta^{-1}\chi(x)\chi'(\xi\eta^{-1})\Lambda(x)\Lambda(\xi|\eta|^{-\epsilon}) + |\eta|^{-\epsilon}\chi(x)\chi(\xi\eta^{-1})\Lambda(x)\Lambda'(\xi |\eta|^{-\epsilon})]\tilde\chi(\eta)\\
	a_\eta &= \Lambda(x)(\Lambda(\xi|(\eta+1)|^{-\epsilon}) - \Lambda(\xi|\eta|^{-\epsilon}))\chi(x)\chi(\xi\eta^{-1})\tilde\chi(\eta)\\
	&\quad + \Lambda(x)\Lambda(\xi|(\eta + 1)|^{-\epsilon})\chi(x)\left[\chi(\xi(\eta+1)^{-1})\tilde\chi(\eta+1) - \chi(\xi\eta^{-1})\tilde\chi(\eta)\right]
\end{align*}

Using this computation we write down $H_{e^{-sf}p}a$ and split it into two parts. We are only interested in the behavior of $H_{e^{-sf}p}a$ where $x$ is small, $\xi\eta^{-1}$ is small, and $\eta$ is large, so we separate out the terms of $H_{e^{-sf}p}a$ where derivatives or difference operators hit $\chi$. These terms are supported where $x$ is large, $\xi$ is large relative to $\eta$, or $\eta$ is small. Because we have already proven our local smoothing estimate in these regions, there is no need to apply our resolvent estimate there. 

We have
\begin{align*}
  H_{e^{-sf}p} & a \\
  &= \biggr[2e^{-sf}\xi \biggr(\chi'(x)\chi(\xi\eta^{-1})\Lambda(x)\Lambda(\xi|\eta|^{-\epsilon}) + \chi(x)\chi(\xi\eta^{-1})\Lambda'(x)\Lambda(\xi|\eta|^{-\epsilon})\biggr)\\
&\quad - (-2e^{-sf}A^{-3}A'\eta^2 - sf_xe^{-sf}p)\biggr( \chi(x)\chi'(\xi\eta^{-1})\eta^{-1}\Lambda(x)\Lambda(\xi|\eta|^{-\epsilon})\\
&\quad \quad+ |\eta|^{-\epsilon}\chi(x)\chi(\xi\eta^{-1})\Lambda(x)\Lambda'(\xi|\eta|^{-\epsilon})\biggr)\\
&\quad - (-sf_\theta e^{-sf}p)\Lambda(x) \biggr(\Lambda(\xi|(\eta+1|)^{-\epsilon}) - \Lambda(\xi|\eta|^{-\epsilon})\biggr)\chi(x)\chi(\xi\eta^{-1})\biggr](\tilde\chi(\eta))\\
&\quad +sf_\theta e^{-sf}p\Lambda(x)\Lambda(\xi|(\eta + 1)|^{-\epsilon})\chi(x)\left[\chi(\xi(\eta+1)^{-1})\tilde\chi(\eta+1) - \chi(\xi\eta^{-1})\tilde\chi(\eta)\right].
\end{align*}
We collect the terms involving derivatives of $\chi$ or $\tilde{\chi}$ and write
\begin{align*}
H_{e^{-sf}p}a &= \biggr[2\xi\Lambda'(x)\Lambda(\xi|\eta|^{-\epsilon}) +2A^{-3}A'|\eta|^{2-\epsilon}\Lambda(x)\Lambda'(\xi|\eta|^{-\epsilon})\\
&\quad + sf_xp|\eta|^{-\epsilon}\Lambda(x)\Lambda'(\xi|\eta|^{-\epsilon}) +sf_\theta p \Lambda(x) \biggr(\Lambda(\xi|(\eta+1)|^{-\epsilon}) - \Lambda(\xi|\eta|^{-\epsilon})\biggr)\biggr]\\
&\quad \times e^{-sf}\chi(x)\chi(\xi\eta^{-1})(\tilde\chi(\eta))\\
&\quad + r,
\end{align*}
where 
\[
\supp r \subset \{|x| \ge \delta/2\}\cup \{|\xi|\ge \delta|\eta|/2\} \cup \{|\eta| \le 2M\}.
\]

We use $g$ to denote the part of $H_{e^{-sf}p}a$ to which we devote most of our efforts. Let
\[
g = (2\xi\Lambda'(x)\Lambda(\xi|\eta|^{-\epsilon}) + 2|\eta|^{2-\epsilon}A^{-3}(x)A'(x)\Lambda (x)\Lambda'(\xi|\eta|^{-\epsilon}))e^{-sf}\chi(x)\chi(\xi\eta^{-1})\tilde\chi(\eta).
\]
We use $\tilde{g}$ to denote the terms in which derivatives have hit $e^{-sf}$:
\begin{align}\label{eq-ref-1}
\tilde{g} &= \biggr[ sf_xp|\eta|^{-\epsilon}\Lambda(x)\Lambda'(\xi|\eta|^{-\epsilon})\\\notag
&\quad +sf_\theta p\Lambda(x) \biggr(\Lambda(\xi|(\eta+1)|^{-\epsilon}) - \Lambda(\xi|\eta|^{-\epsilon})\biggr)\biggr]e^{-sf}\chi(x)\chi(\xi\eta^{-1})\tilde\chi(\eta),
\end{align}
so
\[
H_{e^{-sf}p}a = g + \tilde{g} + r.
\]

Our goal is, roughly, to show that $\tilde{g}$ can be absorbed into $g$ and that $g$ is bounded below by a small multiple of $|\eta|^{-\epsilon}(\xi^2+\eta^2x^{2m})$.

We begin by bounding $\tilde{g}$ from above. Because we will only apply this result to functions which are microlocally supported in the region where  $\chi(x) = 1$, $\chi(\xi\eta^{-1}) = 1$, and $\tilde\chi(\eta) = 1$, we omit the $\chi$ and $\tilde\chi$ factors. We start with the first term in (\ref{eq-ref-1}):
\begin{align*}
|sf_xp|\eta|^{-\epsilon}\Lambda(x)\Lambda'(\xi|\eta|^{-\epsilon})e^{-sf}|&\le C|sf_x|\eta|^{-\epsilon}(\xi^2+\eta^2A^{-2}(x))\Lambda(x)\Lambda'(\xi|\eta|^{-\epsilon})|\\
&\le C|sf_x|\eta|^{2-\epsilon}\Lambda(x)\Lambda'(\xi|\eta|^{-\epsilon})|,
\end{align*}
where we have used the fact that $|\xi| \le C|\eta|$.

For the next term in (\ref{eq-ref-1}) we first use the mean value theorem to note that
\begin{align*}
\left|\Lambda(\xi|(\eta+1)|^{-\epsilon}) - \Lambda(\xi|\eta|^{-\epsilon})\right| &= \left|\int_{\xi|\eta|^{-\epsilon}}^{\xi|(\eta+1)|^{-\epsilon}} \la t\ra^{-1-\epsilon_0}\,dt\right|\\
&\le C|\xi|\left||(\eta+1)|^{-\epsilon} - |\eta|^{-\epsilon}\right|\la \xi|\eta|^{-\epsilon}\ra^{-1-\epsilon_0}\\
&\le C|\xi|||\eta|^{-1-\epsilon}|\la \xi|\eta|^{-\epsilon}\ra^{-1-\epsilon_0}.
\end{align*}
Thus
\begin{align*}
\biggr|-sf_\theta p &\Lambda(x)\biggr(\Lambda(\xi|(\eta+1)|^{-\epsilon}) - \Lambda(\xi|\eta|^{-\epsilon})\biggr)\biggr]e^{-sf}\biggr| \\
&\le C\left|sf_\theta|\eta|^{-1-\epsilon}(\xi^2+\eta^2A^{-2}(x))\xi\Lambda(x)\right|\la \xi|\eta|^{-\epsilon}\ra^{-1-\epsilon_0}\\
&\le C|sf_\theta|\eta|^{2-\epsilon}\Lambda(x)\la \xi|\eta|^{-\epsilon}\ra^{-1-\epsilon_0}|,
\end{align*}
so 
\begin{equation}\label{pert-bound}
|\tilde{g}| \le |s|(|f_x| + |f_\theta|)||\eta|^{2-\epsilon}\Lambda(x)|\la \xi|\eta|^{-\epsilon}\ra^{-1-\epsilon_0}.
\end{equation}

We need to write $g$ in a more useful form. To get started, we recall that the definition of $\Lambda$ is
\[
\Lambda(t) = \int_0^t \la \tilde{t}\ra^{-1-\epsilon_0}\,d\tilde{t},
\]
so $\Lambda'(t) = \la t\ra^{-1-\epsilon_0}$, and
\begin{align*}
g & = (2\xi\la x\ra^{-1-\epsilon_0}\Lambda(\xi|\eta|^{-\epsilon}) +
2|\eta|^{2-\epsilon}A^{-3}(x)A'(x)\Lambda(
x)\la\xi|\eta|^{-\epsilon}\ra^{-1-\epsilon_0})\\
& \quad \times e^{-sf}\chi(x)\chi(\xi\eta^{-1})\tilde\chi(\eta).
\end{align*}
We will assume throughout that $|x| \le \delta/2$, $|\xi\eta^{-1}| \le \delta/2$, and $|\eta|\ge M$ because we will be applying our operators to functions microlocally cutoff near here. In this region, $\chi(x) = 1$, $\chi(\xi\eta^{-1}) = 1$, and $\tilde\chi(\eta) = 1$.

We first break $g$ up into two parts:
\begin{align*}
g &= (2\xi\la x\ra^{-1-\epsilon_0}\Lambda(\xi|\eta|^{-\epsilon}) + 2|\eta|^{2-\epsilon}A^{-3}(x)A'(x)\Lambda (x)\la\xi|\eta|^{-\epsilon}\ra^{-1-\epsilon_0})e^{-sf}\\
&= g_1 + g_2,
\end{align*}
where
\begin{align*}
g_1 &= 2\xi\la x\ra^{-1-\epsilon_0}\Lambda(\xi|\eta|^{-\epsilon})e^{-sf},\\
g_2 &= 2|\eta|^{2-\epsilon}A^{-3}(x)A'(x)\Lambda (x)\la\xi|\eta|^{-\epsilon}\ra^{-1-\epsilon_0}e^{-sf}.
\end{align*}

Before bounding $g$ from below, we note how $\tilde{g}$ may be absorbed into $g$. From (\ref{pert-bound}) we see that
\[
\tilde{g} \le C|s|g_2
\]
as long as $|f_x| \le CA'(x)$ and $|f_\theta| \le CA'(x)$. This is satisfied as long as $|f_x| \le C|x|^{2m-1}$ and $|f_\theta| \le C|x|^{2m-1}$.
We will consider two cases.

{\bf Case 1:}  In the first case we make the assumption that $|\xi|\eta|^{-\epsilon}| \le \delta$. We are working where $\Lambda$ is only applied to small quantities, and for $|t|$ small $\Lambda(t) = t + \cO(t^3)$ and $\la t\ra^{-1-\epsilon} = 1 + \cO(t^2)$.

We write out $g_1$. Because $|\eta|$ is relatively large and $\xi$ is relatively small, the most important term will end up being $2|\eta|^{-\epsilon}\xi^2e^{-sf}$. Below we will show how the other terms may be absorbed. We separate out this term by writing
\begin{align*}
g_1 &= (2\xi(1+\cO(x^2))(\xi|\eta|^{-\epsilon} + \cO((\xi|\eta|^{-\epsilon})^3)e^{-sf} \\
&= 2\xi^2|\eta|^{-\epsilon}(1+\cO(x^2))(1+\cO((\xi|\eta|^{-\epsilon})^2))e^{-sf} \\
&= 2|\eta|^{-\epsilon}\xi^2e^{-sf} + \cO(x^2\xi^2|\eta|^{-\epsilon}) + \cO(\xi^4|\eta|^{-3\epsilon}) + \cO(\xi^4|\eta|^{-3\epsilon}x^2)\\
&= 2|\eta|^{-\epsilon}\xi^2e^{-sf} + \xi^2|\eta|^{-\epsilon}\left(\cO(x^2) + \cO((\xi|\eta|^{-\epsilon})^2)\right)
\end{align*}
where we have used the fact that there exists $C$ such that $e^{-sf(x,\theta)} \le C$.

Because $|x| \le \delta$ and $|\xi|\eta|^{-\epsilon}| \le \delta$, we then have
\[
g_1 = 2|\eta|^{-\epsilon}\xi^2e^{-sf}(1+\cO(\delta^2)).
\]

Along the same lines, for $g_2$, the most important term in the expansion will be $2|\eta|^{-\epsilon}(|\eta| x^m)^2e^{-sf}$. Recall that $A(x) = (1+x^{2m})^{1/2m}$. Here we use Taylor's theorem to expand $A^{-3}(x)A'(x)$ and write
\begin{align*}
g_2 &= 2|\eta|^{2-\epsilon}A^{-3}(x)A'(x)(x+\cO(x^3))(1+\cO((\xi|\eta|^{-\epsilon})^2)))e
^{-sf}\\
&= 2|\eta|^{2-\epsilon}(x^{2m-1} + \cO(x^{4m-1}))x (1+\cO(x^2))(1+(\cO((\xi|\eta|^{-\epsilon})^2)))e^{-sf}\\
&= 2|\eta|^{-\epsilon}(|\eta| x^m)^2e^{-sf} + 2|\eta|^{2-\epsilon}x^{2m}\left((\cO(x^{4m}) + \cO((x^{m}\xi|\eta|^{-\epsilon})^2)\right).
\end{align*}

Again using that $|x| \le \delta$ and $|\xi|\eta|^{-\epsilon}| \le \delta$, we have
\[
g_2 = 2|\eta|^{-\epsilon}(|\eta| x^m)^2e^{-sf}(1 + \cO(\delta^2)).
\]
Because $|\tilde{g}| \le C|s|g_2$, we then have $g_2 + \tilde{g} = g_2(1+\cO(s))$, hence
\[
g_2 + \tilde{g} = 2|\eta|^{-\epsilon}(|\eta| x^m)^2e^{-sf}(1 + \cO(\delta^2) + \cO(s)).
\]
We can thus write
\[
g + \tilde{g} = 2e^{-sf}|\eta|^{-\epsilon}(\xi^2+|\eta|^2x^{2m})(1+\cO(\delta^2) + \cO(s))
\]
as long as $|\xi|\eta|^{-\epsilon}| \le \delta$.

{}
{\bf Case 2:}  
We move on to our other case, where $|\xi |\eta|^{-\epsilon}| \ge \delta$. Our cutoff functions still allow us to assume that $|x| \le \delta$, $|\xi\eta^{-1}| \le \delta$, and $|\eta|$ is large.  

In this region, we will show that $g + \tilde{g}$ is elliptic. We will consider two sub-cases, based on the size of $x$ relative to the size of $\xi|\eta|^{-\epsilon}$.

We first note that $g_1, g_2 \ge 0$. Also note using the bound on
$\tilde{g}$ given by (\ref{pert-bound}) that, for $s$ sufficiently
small, 
\begin{align*}
g_1 + g_2 + \tilde{g} = g_1 + g_2(1+\cO(s)) \ge g_1 + (1-C|s|)g_2 \ge c(g_1+g_2).
\end{align*}
Hence showing that $g$ is elliptic will show that $g + \tilde{g}$ is elliptic.

For our first sub-case, suppose $|x|^{1+\epsilon_0} \ge
|\xi|\eta|^{-\epsilon}| \ge \delta$.   Then 
\begin{align*}
g_2 &=  2e^{-sf}|\eta|^{2-\epsilon}A^{-3}(x)A'(x)\Lambda (x)\la\xi|\eta|^{-\epsilon}\ra^{-1-\epsilon_0}\\
&= 2e^{-sf}|\eta|^{2-\epsilon}x^{2m}(1+\cO(x^{2m}))(1+\cO(x^2))\la \xi|\eta|^{-\epsilon}\ra^{-1-\epsilon_0}\\
&\ge ce^{-sf}|\eta|^{2-\epsilon}x^{2m}(1+\cO(x^{2}))\la \xi|\eta|^{-\epsilon}\ra^{-1-\epsilon_0}\\
&\ge c|\eta|^{2-\epsilon}x^{2m}|\xi|\eta|^{-\epsilon}|^{-1-\epsilon_0}\\
&\ge c|\eta|^{2-\epsilon}x^{2m}|x|^{-(1+\epsilon_0)^2}\\
&\ge c|\eta|^{2-\epsilon}.
\end{align*}

For our second sub-case, if $|\xi|\eta|^{-\epsilon}| \ge |x|^{1+\epsilon_0}$, but still $|\xi|\eta|^{-\epsilon}| \ge \delta$, then
\begin{align*}
g_1 &= 2e^{-sf}\xi\la x\ra^{-1-\epsilon_0}\Lambda(\xi|\eta|^{-\epsilon})\\
& \geq c \frac{ \xi}{|\xi|} | \eta |^\epsilon \Lambda (\xi | \eta
|^{-\epsilon}) \\
&\ge c|\eta|^{\epsilon}
\end{align*}
since $\xi$ and $\Lambda (\xi |\eta |^{-\epsilon} )$ have the same
sign.  
Hence $g \ge c|\eta|^{\epsilon}$.
In either sub-case, we find that in this region $g \ge c|\eta|^{\epsilon}$ and hence $\tilde{g} + g \ge c|\eta|^{\epsilon}$.

Considering both cases, there thus exists a $\sigma > 0$ such that if
$u$ is microlocally supported only in the region where
$|\xi|\eta|^{-\epsilon}| \le \delta$, 
\[
\la (g+\tilde{g})^wu,u\ra \ge \sigma\|\la D_\theta\ra^{\epsilon/2}u\|^2.
\]
 We have shown that there exists $c > 0$ such that here we may write
\[
g+\tilde{g} \ge c|\eta|^{-\epsilon}(\xi^2+\eta^2x^{2m})(1+\cO(\delta^2)+\cO(s)).
\]
For $s>0$ sufficiently small, this then allows us to write
\[
g+\tilde{g} = |\eta|^{-\epsilon}(\xi^2+\eta^2x^{2m})K^2,
\]
where $K$ is a strictly positive symbol. Because we are using the Weyl quantization here, this quantizes as
\[
\Op^w(K)^*(D_\theta)^{-\epsilon}(D_x^2 + D_\theta^2x^{2m})\Op^w(K) + \cO(\la D_\theta\ra^{-\epsilon}),
\]
so that 
\[
\la (g+\tilde{g})^wu,u\ra \ge \la  (D_\theta)^{-\epsilon}(D_x^2 + D_\theta^2x^{2m})u,u\ra - \cO\|(\la D_\theta\ra^{-\epsilon/2})u\|^2
\]

We thus have
\[
\la (H_{e^{-sf}p}a)^wu,u\ra \ge \la  (D_\theta)^{-\epsilon}(D_x^2 + D_\theta^2x^{2m})u,u\ra - \cO(\|\la D_\theta\ra^{-\epsilon/2}u\|^2).
\]
\end{proof}

\begin{proof}[Proof of Theorem \ref{Main-theorem}]
In the symbol calculus, the commutator $[(e^{-sf}p)^w, a^w]$ has principal symbol $H_{e^{-sf}p}a$, but we will still need to control the remaining terms. Let
\[
R_1 = [(e^{-sf}p)^w,a^w] - (H_{e^{-sf}p}a)^w.
\]
Then we have
\[
[(e^{-sf}p)^w+\tau,a^w] = (H_{e^{-sf}p}a)^w + R_1.
\]
Applying this to $u$ and taking an inner product with $u$ we find the equality
\[
\la [(e^{-sf}p)^w+\tau,a^w]u,u\ra = \la (H_{e^{-sf}p}a)^wu,u\ra + \la R_1u,u\ra.
\]
We can then apply the above Theorem \ref{hamiltonian_theorem} to find
\begin{align}\label{med-eq-1}
\left|\la [(e^{-sf}p)^w+\tau,a^w]u,u\ra\right| &\ge c\la\la D_\theta\ra^{-\epsilon}(D_x^2+D_\theta^2x^{2m})u,u\ra - C\|\la D_\theta\ra^{-\epsilon/2}u\|^2\\\notag
&\quad - \left|\la R_1u,u\ra\right|
\end{align}

Our goal is to bound $\la R_1u,u\ra$ from above in such a way that it can be absorbed into the term $c\la \la D_\theta\ra^{-\epsilon}(D_x^2+D_\theta^2x^{2m})u,u\ra$.

The commutator $[(e^{-sf}p)^w, a^w]$ has symbol given by
\[
\left.\sum_{k=0}^N \frac{i^k}{k!}\sigma(D)^k\left[p(x,\xi,\theta,\eta) a(\tilde x , \tilde \xi, \tilde\theta, \tilde\eta) - (e^{-sf}p)(\tilde x, \tilde\xi, \tilde\theta, \tilde\eta) a(x,\xi,\theta,\eta)\right]\right|_{\text{diag}} + \cO(\la \eta \ra^{-\epsilon N}) ,
\]
where $\biggr|_{\text{diag}}$ denotes evaluation along the diagonal, i.e. $x = \tilde x$, $\xi = \tilde \xi$, $\theta = \tilde\theta$, and $\eta = \tilde\eta$. The bound on the error term is a result of the symbol class we are working in. Recall also from Theorem \ref{symb-1} that 
\begin{equation*}
A(D) = \frac{1}{2}\left(\la (D_\xi, D_\eta), (D_{\tilde{x}}, D_{\tilde\theta})\ra - \la (D_x, D_\theta),(D_{\tilde\xi}, D_{\tilde\eta}\ra\right).
\end{equation*}

The first non-zero term in this expansion is $H_{e^{-sf}p}a$. Because we are using the Weyl calculus, there are no even terms. Therefore, when applying this symbol expansion to write down $R_1$, the first term is
\begin{equation}
  \label{E:A-D}
  \left.\frac{i^3}{3!}A(D)^3\left[(e^{-sf}p)(x,\xi,\theta,\eta) a(\tilde x , \tilde \xi, \tilde\eta) - (e^{-sf}p)(\tilde x, \tilde\xi, \tilde\theta, \tilde\eta) a(x,\xi,\eta)\right]\right|_{\text{diag}}.\end{equation}

Before expanding $A(D)^3$, we note that $a$ does not depend on
$\theta$, so there will be no terms involving $\theta$ derivatives of
$a$, and thus no terms involving $\eta$ differences of $e^{-sf}p$. We
have to consider every combination of $x, \xi,$ and $\theta$ as the
derivative we will be applying to $e^{-sf}p$ in \eqref{E:A-D}. Because
of the symbol class of $a$ and $p$ we know that we will gain $3$
powers of $|\eta|^{-\epsilon}$. We also know that if any derivative
hits the term $e^{-sf}$ then we will have gained a derivative of $f$
and a factor of $s$. When this is the case, we can bound these  above by
\begin{equation}\label{remainder_factor}
C|sf_*|\eta|^{2-\epsilon}\Lambda(x)|,
\end{equation}
where $f_*$ denotes some (first, second, or third) derivative of $f$. As long as we require $|f_*| \le C|x|^{2m-1}$ we find that (\ref{remainder_factor}) is bounded above by $C|s|||\eta|^{2-\epsilon}|x|^{2m}$.

The remaining term occurs when three $x$ derivatives all hit $p$. This term is
\[
Ce^{-sf}(D_x^3 p)(D_\xi^3 a)= Ce^{-sf}(A^{-2})'''(x)|\eta|^{2-3\epsilon} \Lambda(x)\Lambda'''(\xi|\eta|^{-\epsilon})\chi(x)\chi(\xi\eta^{-1})(\tilde\chi(\eta)) + r_2,
\]
where 
\[
\supp r_2 \subset \{|x| \ge \delta\}\cup \{|\xi|\ge \delta|\eta|\} \cup \{|\eta| \ge \delta\}.
\]
Because we will be applying our estimate only on functions microlocally supported away from the support of $r_2$, it will pose no problem to absorb this term. For now, we simply carry this term along.

We first note that
\[
|(A^{-2})'''| \le C|x|^{2m-3}.
\]
Next note that
\[
|\Lambda(x)| \le |x|.
\]
Hence
\[
 C|(A^{-2})'''(x)|\eta|^{2-3\epsilon} \Lambda(x)\Lambda'''(\xi|\eta|^{-\epsilon})\chi(x)\chi(\xi\eta^{-1})(\tilde\chi(\eta)) |\le Cx^{2m-2}|\eta|^{2-3\epsilon}.
\]
Next we need to control the symbol $Cx^{2m-2}|\eta|^{2-3\epsilon}$.

When $|x|^{-1} \le c_0||\eta||^{\epsilon}$, we have
\[
|Cx^{2m-2}|\eta|^{2-3\epsilon}| \le Cc_0x^{2m}|\eta|^{2-\epsilon},
\]
which can be absorbed into \eqref{med-eq-1} as long as $c_0$ is small enough.  

On the other hand, if $|x| \le (c_0)^{-1}|\eta|^{-\epsilon}$ then
\[
|Cx^{2m-2}|\eta|^{2-3\epsilon}| \le C(c_0)^{-2m+2}|\eta|^{(1-2m)\epsilon}.
\]
Because $m \ge 2$ and $|\eta|$ is large, $|\eta|^{(1-2m)}$ is small,
so we may then absorb this term into $\|\la D_\theta\ra^{-\epsilon/2}u\|_{L^2}^2$. 

We may similarly bound $A(D)^k pa$ for higher powers. Note that for every further term we write down in the expansion, the power of $|\eta|$ in the remainder term is improved. This follows from the symbol class of $e^{-sf}p$ and $a$.

Before bounding the remainder term in this symbol expansion, we make a couple of notes. We know that $e^{-sf}p$ satisfies the inequalities
\[
|\p_x^\alpha \p_\xi^\beta \p_\theta^\gamma\p_\eta^\delta (e^{-sf}p)| \le C_{\alpha,\beta,\gamma,\delta} \la\eta\ra^{2-\epsilon(\beta+\delta)}.
\]

As we stated before, $a$ satisfies the inequalities for $\la \xi \ra
\lesssim \la \eta \ra$
\[
|\p_x^\alpha \p_\xi^\beta \p_\theta^\gamma\p_\eta^\delta a| \le C_{\alpha,\beta,\gamma,\delta} \la\eta\ra^{-\epsilon(\beta+\delta)}.
\]
Let $E_N$ denote the remainder term obtained after expanding the first $N$ terms of the commutator $[P, a^w]$. Using our hybrid calculus, we know that $E_N$ has symbol in the class $S^{2-N\epsilon}_{\epsilon}$, hence
\[
\|E_Nu\|_{L^2_{x,\theta}} \le \|\la D_\theta\ra^{2-\epsilon N} u\|_{L^2_{x,\theta}}.
\]
By taking $N$ large enough we will be able to absorb this term into our final lower bound.

Combining all this, we have shown
\[
|\la e^{-sf}R_1u,u\ra| \le C\|u\|^2 + C(c_0+|s|)\|(\la D_\theta\ra^{1-\epsilon/2}x^m)u\|^2,
\]
where $c_0$ is small. Note that
\begin{align*}
& c\la \la D_\theta\ra^{-\epsilon}(D_x^2+D_\theta^2x^{2m})u,u\ra -
  C(c_0+|s|)\|(\la D_\theta\ra^{1-\epsilon/2}x^m)u\|^2 \\
  & \quad \ge \tilde{c}\la \la D_\theta\ra^{-\epsilon}(D_x^2+D_\theta^2x^{2m})u,u\ra,
\end{align*}
where $\tilde{c} > 0$ is smaller than $c$.

In total we have found
\[
\left|\la [((e^{-sf}p)^w+\tau),a^w]u\ra\right| \ge c\la \la D_\theta\ra^{-\epsilon}(D_x^2+D_\theta^2x^{2m})u,u\ra - C\|u\|^2.
\]

We will now bound the left hand side from above. We expand the commutator to find
\[
\left|\la[((e^{-sf}p)^w+\tau), a^w]u,u\ra\right|  \le \left|\la ((e^{-sf}p)^w + \tau)a^wu,u\ra\right| + \left|\la a^w((e^{-sf}p)^w+\tau)u,u\ra\right|.
\]
Because $a^w$ and $(e^{-sf}p)^w + \tau$ are self-adjoint, we can combine these terms and obtain the bound
\[
\left|\la[((e^{-sf}p)^w+\tau), a^w]u,u\ra\right|  \le 2\left|\la ((e^{-sf}p)^w + \tau)u,a^wu\ra\right|.
\]

We now apply the identity operator, in the form $\la D_\theta\ra^{1/(m+1)-\epsilon/2}\la D_\theta\ra^{-1/(m+1)+\epsilon/2}$, to the right hand side:
\begin{align*}
  & \left|\la[((e^{-sf}p)^w+\tau), a^w]u,u\ra\right|  \\
  &\le 2\left|\la \la D_\theta\ra^{-1/(m+1)+\epsilon/2} ((e^{-sf}p)^w + \tau)u,\la D_\theta\ra^{1/(m+1)-\epsilon/2}a^wu\ra\right|\\
&\le C\|\la D_\theta\ra^{-1/(m+1)+\epsilon/2} ((e^{-sf}p)^w + \tau)u\|_{L^2}\|\la D_\theta\ra^{1/(m+1)-\epsilon/2}a^wu\|_{L^2}
\end{align*}
We compute: 
\begin{align}\notag
&C\|\la D_\theta\ra^{-1/(m+1)+\epsilon/2}((e^{-sf}p)^w + \tau)u\|_{L^2}\|\la D_\theta\ra^{1/(m+1)-\epsilon/2}a^wu\|_{L^2}\\\notag
&\le C\big(\|((e^{-sf}p)^w + \tau)\la
  D_\theta\ra^{-1/(m+1)+\epsilon/2}u\|_{L^2}^2 \\
  & \quad \notag + \|[\la D_\theta\ra^{-1/(m+1)+\epsilon/2},((e^{-sf}p)^w + \tau)]u\|_{L^2}^2\big)\\
&\quad \times \big(\|a^w\la D_\theta\ra^{1/(m+1)-\epsilon/2}u\|_{L^2}^2 + \|[\la D_\theta\ra^{1/(m+1)-\epsilon/2},a^w]u\|_{L^2}^2\big).\label{almost-final-bound}
\end{align}

To estimate \eqref{almost-final-bound} we note that
\[
\|[\la D_\theta\ra^{1/(m+1)-\epsilon/2},a^w]u\|_{L^2} \le C\|\la D_\theta\ra^{1/(m+1)-\epsilon/2}u\|_{L^2}.
\]
Furthermore if we expand the commutator $[\la D_\theta\ra^{-1/(m+1)+\epsilon/2}, (e^{-sf}p)^w+\tau)]$ using the symbol calculus and the condition on derivatives of $f$ we find that we can bound the first $N$ terms by
\[
C|s||x|^m\la \eta\ra^{1-\epsilon/2},
\]
and thanks to the gains in powers of $|\eta|^{-\epsilon}$ we can guarantee that the remainder term has bounded symbol.

Thus we can bound (\ref{almost-final-bound}) from above by
\begin{align*}
C&\left(\|((e^{-sf}p)^w + \tau)\la D_\theta\ra^{-1/(m+1)+\epsilon/2}u\| + |s|\|(\|C|x|^m\la D_\theta\ra^{1-\epsilon/2})^wu\| + \|u\|\right)\\
&\quad \times \|\la D_\theta\ra^{1/(m+1)-\epsilon/2}u\|\\
&\le \|((e^{-sf}p)^w + \tau)\la D_\theta\ra^{-1/(m+1)+\epsilon/2}u\|\|\la D_\theta\ra^{1/(m+1)-\epsilon/2}u\|\\
&\quad + c_1\|\la D_\theta\ra^{1/(m+1)-\epsilon/2}u\|^2 + Cc_1^{-1}\|u\|^2 + (Cc_1^{-1}|s|)\||x|^m\la D_\theta\ra^{1-\epsilon/2})^wu\|,
\end{align*}
where $c_1 > 0$ is very small.  Note similarly to before that
\begin{align*}
c&\la \la D_\theta\ra^{-\epsilon}(D_x^2+D_\theta^2x^{2m})u,u\ra-(Cc_1^{-1}|s|)\||x|^m\la D_\theta\ra^{1-\epsilon/2})^wu\|\\
&\quad \ge c_0\la \la D_\theta\ra^{-\epsilon}(D_x^2+D_\theta^2x^{2m})u,u\ra,
\end{align*}
where $c_0>0$ is slightly smaller than $c$, as long as $s$ is sufficiently small. 

Also note that since we are working microlocally where $|\eta|$ is large,
\[
\|u\|^2 \ll c_1\|\la D_\theta\ra^{1/(m+1)-\epsilon/2}u\|^2.
\]

We thus have the estimate
\begin{align*}
&\|((e^{-sf}p)^w + \tau)\la D_\theta\ra^{-1/(m+1)+\epsilon/2}u\|\|\la D_\theta\ra^{1/(m+1)-\epsilon/2}u\|\\
&\ge c\la \la D_\theta\ra^{-\epsilon}(D_x^2+D_\theta^2x^{2m})u,u\ra - 2c_1\|\la D_\theta\ra^{1/(m+1)-\epsilon/2}u\|^2. 
\end{align*}

Applying Lemma \ref{easy_lemma} we then find
\begin{align*}
&\|((e^{-sf}p)^w + \tau)\la D_\theta\ra^{-1/(m+1)+\epsilon/2}u\|\|\la D_\theta\ra^{1/(m+1)-\epsilon/2}u\|\\
&\ge c\|\la D_\theta\ra^{1/(m+1)-\epsilon/2}u\|^2 - 2c_1\|\la D_\theta\ra^{1/(m+1)-\epsilon/2}u\|^2\\
&\ge c\|\la D_\theta\ra^{1/(m+1)-\epsilon/2}u\|^2,
\end{align*}
as long as $c_1$ is sufficiently small. Dividing through by $\|\la D_\theta\ra^{1/(m+1)-\epsilon/2}u\|$ we have the inequality
\[
\|((e^{-sf}p)^w + \tau)\la D_\theta\ra^{-1/(m+1)+\epsilon/2}u\| \ge c\|\la D_\theta\ra^{1/(m+1)-\epsilon/2}u\|.
\]
Finally, plugging in $\la D_\theta\ra^{1/(m+1)-\epsilon/2}u$ we end up with the inequality
\[
\|((e^{-sf}p)^w + \tau)u\| \ge c\|\la D_\theta\ra^{2/(m+1)-\epsilon}u\|,
\]
which proves the theorem.
\end{proof}

The following Lemma and its proof follows Lemma A.2 in \cite{ChWu-lsm}.
\begin{lemma}\label{easy_lemma} There exists $c > 0$ such that
\[
\la \la D_\theta\ra^{-\epsilon}(-\p_x^2 - \p_{\theta}^2 x^{2m})u,u\ra \ge c\|\la D_\theta\ra^{1/(m+1) - \epsilon/2} u\|^2
\]
for all $u \in \mathcal{S}$ with microlocal support where $\eta > 0$.
\end{lemma}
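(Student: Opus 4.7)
The plan is to diagonalize the $\theta$-dependence via Fourier series on $S^1$ and reduce the estimate, mode by mode, to the standard spectral bound for the anharmonic oscillator $-\p_y^2 + y^{2m}$ on $\R$.

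Writing $u(x,\theta) = \sum_{\eta} \hat u(x,\eta) e^{i\theta\eta}$ with $\hat u$ supported in $\eta > 0$ (so $\eta \ge 1$), Parseval gives
\[
\la \la D_\theta\ra^{-\epsilon}(-\p_x^2 - \p_\theta^2 x^{2m})u,u\ra = \sum_{\eta\ge 1} \la\eta\ra^{-\epsilon} \int_{\R} \bigl( -\p_x^2 + \eta^2 x^{2m}\bigr)\hat u(x,\eta)\,\overline{\hat u(x,\eta)}\,dx,
\]
and similarly
\[
\|\la D_\theta\ra^{1/(m+1)-\epsilon/2}u\|^2 = \sum_{\eta\ge 1}\la\eta\ra^{2/(m+1)-\epsilon}\|\hat u(\cdot,\eta)\|_{L^2_x}^2.
\]
It therefore suffices to produce a constant $c>0$ with
\[
\int_\R \bigl(-\p_x^2 + \eta^2 x^{2m}\bigr) w\,\bar w\,dx \;\ge\; c\,\eta^{2/(m+1)}\|w\|^2_{L^2_x}
\]
for every $\eta\ge 1$ and $w\in \mathcal S(\R)$.

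To establish this mode-by-mode estimate, I would use the rescaling $x = \eta^{-1/(m+1)} y$ with $w(x) = \eta^{1/(2(m+1))}v(y)$ (so $\|w\|_{L^2_x} = \|v\|_{L^2_y}$), which turns the quadratic form into
\[
\int_\R \bigl(-\p_x^2 + \eta^2 x^{2m}\bigr) w\,\bar w\,dx = \eta^{2/(m+1)}\int_\R \bigl(-\p_y^2 + y^{2m}\bigr)v\,\bar v\,dy.
\]
The operator $H_m := -\p_y^2 + y^{2m}$ on $L^2(\R)$ is essentially self-adjoint, has compact resolvent (its domain embeds compactly in $L^2$ by the confining potential $y^{2m}$), and is strictly positive, so its ground-state eigenvalue $\lambda_0(m) > 0$ satisfies $\la H_m v, v\ra \ge \lambda_0(m)\|v\|^2$. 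Setting $c=\lambda_0(m)$ and reversing the rescaling gives the mode-by-mode bound, and summing against $\la\eta\ra^{-\epsilon}$ yields the lemma.

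The only substantive step is the lower bound $\la H_m v,v\ra \ge \lambda_0(m)\|v\|^2$; this is standard (e.g.\ Reed--Simon), but one can also give an elementary proof via Hardy/uncertainty-type inequalities. All the remaining manipulations — Plancherel, the exact rescaling exponent $x=\eta^{-1/(m+1)}y$, and the fact that $\la\eta\ra \sim \eta$ for $\eta\ge 1$ — are routine.
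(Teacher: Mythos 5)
Your proposal is correct and follows essentially the same route as the paper: Fourier decomposition in $\theta$, the rescaling $x = |\eta|^{-1/(m+1)}\tilde{x}$ to reduce each mode to the fixed anharmonic oscillator $-\p_y^2 + y^{2m}$, and the strict positivity of its ground-state energy. The only cosmetic difference is that you carry the $\la\eta\ra^{-\epsilon}$ weight through the mode-by-mode computation directly, whereas the paper first proves the inequality without that weight and then applies it to $\la D_\theta\ra^{-\epsilon/2}u$, using that $\la D_\theta\ra^{-\epsilon/2}$ is self-adjoint and commutes with the operator.
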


This lemma depends on the following result on the anharmonic oscillator (See \cite{ReSi-IV}).
\begin{theorem} Let $P = -\p_x^2+x^{2m}$ with $m \in \Z_{\ge 2}$. Then as an operator on $L^2$ with domain $\mathcal{S}$, $P$ is essentially self-adjoint and has pure point spectrum with eigenvalues $\lambda_j \to \infty$. Every eigenfunction is in $\mathcal{S}$ and furthermore $\lambda_0 > 0$.
\end{theorem}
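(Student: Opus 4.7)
\medskip

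\noindent\textbf{Proof proposal.} The statement packages four distinct claims about the one-dimensional anharmonic oscillator $P=-\p_x^2+x^{2m}$: essential self-adjointness on $\mathcal{S}$, discreteness of spectrum with $\lambda_j\to\infty$, Schwartz regularity of eigenfunctions, and positivity of the ground state. I would handle them in that order.

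For essential self-adjointness, I would first note that $P$ is symmetric on $\mathcal{S}$ and non-negative, since integration by parts gives $\la Pu,u\ra=\|u'\|_{L^2}^2+\|x^m u\|_{L^2}^2$ for $u\in\mathcal{S}$. The plan is to invoke the classical Sears/Weyl limit-point criterion for one-dimensional Schr\"odinger operators: if $V\in C^\infty(\R)$ is bounded below and satisfies $V(x)\geq -Cx^2$ for $|x|$ large, then $-\p_x^2+V$ is in the limit-point case at both $\pm\infty$ and thus essentially self-adjoint on $C_c^\infty$, hence on $\mathcal{S}$. Since $V(x)=x^{2m}\geq 0$ this hypothesis is trivially met. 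Equivalently, one can verify directly that $\ker(P^*+1)=\{0\}$ by elliptic regularity (any $L^2$ distributional solution of $-v''+(x^{2m}+1)v=0$ is smooth) followed by a Gronwall/WKB comparison showing that both linearly independent solutions of the ODE at infinity grow or decay like $\exp(\pm x^{m+1}/(m+1))$, so no nonzero solution can be $L^2$ at both ends simultaneously.

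For discreteness I would show the resolvent $(P+1)^{-1}$ is compact. The quadratic form domain $\mathcal{Q}$ consists of $u\in L^2$ with $u'\in L^2$ and $x^m u\in L^2$, and on $\mathcal{Q}$ one has the uniform bound
\[
\|u'\|_{L^2}^2+\|x^m u\|_{L^2}^2\leq \la(P+1)u,u\ra.
\]
A bounded set in $\mathcal{Q}$ therefore has uniformly bounded $H^1_{\mathrm{loc}}$ norm and uniform decay $\int_{|x|\geq R}|u|^2\leq R^{-2m}C$. Rellich-Kondrachov on each ball $\{|x|\leq R\}$ combined with this tail estimate gives precompactness in $L^2$, so $(P+1)^{-1}:L^2\to L^2$ is compact. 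Being compact self-adjoint it has eigenvalues $\mu_j\to 0^+$, hence $P$ has pure point spectrum $\lambda_j=\mu_j^{-1}-1\to\infty$.

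For the Schwartz regularity of eigenfunctions and the positivity of $\lambda_0$, I would proceed as follows. Given $P\phi=\lambda\phi$ with $\phi\in L^2$, elliptic regularity of $-\p_x^2+x^{2m}-\lambda$ gives $\phi\in C^\infty$. For decay, I would bootstrap via weighted energy identities: multiplying the eigenvalue equation by $x^{2k}\phi$, integrating by parts, and using $\la Pu,u\ra\geq \|x^m u\|^2$ inductively produces $x^k\phi\in L^2$ for every $k\geq 0$, and then repeated differentiation of the ODE $\phi''=(x^{2m}-\lambda)\phi$ gives $x^k\p_x^j\phi\in L^2$ for all $j,k$, so $\phi\in\mathcal{S}$. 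Finally $\lambda_0=\inf_{\|u\|=1}\la Pu,u\ra=\inf_{\|u\|=1}(\|u'\|^2+\|x^m u\|^2)\geq 0$, and if $\lambda_0=0$ were attained by the ground state $\phi_0$, both $\|\phi_0'\|=0$ and $\|x^m\phi_0\|=0$, forcing $\phi_0\equiv 0$, a contradiction. Thus $\lambda_0>0$.

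The main technical obstacle is the essential self-adjointness step, since the other three pieces follow from fairly standard functional-analytic inputs once one knows $P$ generates a genuine self-adjoint operator on $L^2$; this is why the natural strategy is to cite the limit-point criterion (or equivalently to verify the ODE asymptotics directly) rather than prove self-adjointness from scratch.
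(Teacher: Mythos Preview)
The paper does not prove this theorem at all: it is stated with a parenthetical ``(See \cite{ReSi-IV})'' and then used as a black box in the proof of Lemma~\ref{easy_lemma}. So there is no ``paper's own proof'' to compare against; you have supplied an argument where the authors simply cite Reed--Simon.

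Your proof sketch is correct and follows the standard route one finds in the cited reference. The limit-point criterion (or the equivalent WKB asymptotics showing no nonzero $L^2$ solution of $(P^*+1)v=0$) handles essential self-adjointness; the form-domain bound plus Rellich and the tail estimate gives compactness of the resolvent and hence discrete spectrum; the ODE bootstrap gives $\phi\in\mathcal{S}$; and the variational characterization gives $\lambda_0>0$. One minor remark: in the Schwartz-regularity step, the weighted integration-by-parts produces boundary-type terms (from $[P,x^{2k}]$) that you should check are controlled by the previous step of the induction, but this is routine and the argument closes as written.
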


\begin{proof}[Proof of Lemma \ref{easy_lemma}]
Letting $\hat{u}$ denote the Fourier transform in only $\theta$, we note
\[
\la (-\p_x^2-\p_{\theta}^2x^{2m})u,u\ra = \la (-\p_x^2+\eta^2x^{2m}\hat{u},\hat{u}\ra.
\]
This inner product is
\[
\sum_{\eta \in \Z}\left(\int ((-\p_x^2+\eta^2x^{2m})\hat{u})\overline{\hat{u}}\,dx\right).
\]
We make the change of variables $x = |\eta|^{-1/(m+1)}\tilde{x}$ to obtain
\begin{align*}
\sum_{\eta \in \Z}|\eta|^{-1/(m+1)}&\left(\int ((-|\eta|^{2/(m+1)}\p_{\tilde{x}}^2+|\eta|^{2-2m/(m+1)}\tilde{x}^{2m})\hat{u})\overline{\hat{u}}\,d\tilde{x}\right)\\
&\ge c\sum_{\eta \in \Z}|\eta|^{1/(m+1)}\left(\int(-\p_{\tilde{x}}^2 + \tilde{x}^{2m})\hat{u}\overline{\hat{u}} \,d\tilde{x}\right).
\end{align*}
We then apply Lemma A.1 from \cite{ChWu-lsm} to find
\begin{align*}
\sum_{\eta \in \Z}|\eta|^{1/(m+1)}\left(\int(-\p_x^2 + \tilde{x}^{2m})\hat{u}\overline{\hat{u}} \,d\tilde{x}\right) &\ge c\sum_{\eta \in \Z}\int|\eta|^{1/(m+1)}\hat{u}\overline{\hat{u}}\,d\tilde{x}\\
&= c\sum_{\eta \in \Z}\int |\eta|^{2/(m+1)}\hat{u}\overline{\hat{u}}\,dx\\
&\ge c\la \la D_\theta\ra^{2/(m+1)}u,u\ra,
\end{align*}
where we have used the fact that this will only be applied to $\hat{u}$ supported away from $\eta = 0$.

To achieve the result with $\la D_\theta\ra^{-\epsilon}$ in front of
the operator, we apply the inequality we have just proven to the function $\la D_\theta\ra^{-\epsilon/2}$. Because $\la D_\theta\ra^{-\epsilon/2}$ is self-adjoint and commutes with the operator $\la D_\theta\ra^{-\epsilon}(-\p_x^2 - \p_\theta^2x^{2m})$, this proves the lemma.
\end{proof}

\bibliographystyle{alpha}
\bibliography{conf-bib}

\end{document}